\theoremstyle{definition}
\newtheorem{theorem}{Theorem}[section]
\newtheorem{definition}[theorem]{Definition}
\newtheorem{proposition}[theorem]{Proposition}
\newtheorem{lemma}[theorem]{Lemma}
\newtheorem{corollary}[theorem]{Corollary}
\newtheorem{example}[theorem]{Example}
\newcommand{\B}{\mathfrak B}
\newcommand{\OL}{{\mathfrak O}_{L}}
\newcommand{\A}{{\mathfrak A}}
\newcommand{\barphi}{\overline{\varphi}}
\renewcommand{\bar}{\overline}
\DeclareMathOperator{\Perm}{\mbox{Perm}}
\DeclareMathOperator{\Hol}{\mbox{Hol}}
\DeclareMathOperator{\Aut}{\mbox{Aut}}
\begin{document}

\title[$ \rho $-conjugate Hopf-Galois structures]{On $ \rho $-conjugate Hopf-Galois structures}

\author{Paul J. Truman}

\address{School of Computing and Mathematics \\ Keele University \\ Staffordshire \\ ST5 5BG \\ UK}

\email{P.J.Truman@Keele.ac.uk}

\subjclass[2020]{Primary 16T05; Secondary 12F10, 11S33, 20N99}

\keywords{Hopf-Galois structure, Hopf-Galois theory, Skew left braces, Galois module structure, Associated order}

\begin{abstract}
The Hopf-Galois structures admitted by a given Galois extension of fields $ L/K $ with Galois group $ G $ correspond bijectively with certain subgroups of $ \Perm(G) $. We use a natural partition of the set of such subgroups to obtain a method for partitioning the  set of corresponding Hopf-Galois structures, which we term \textit{$ \rho $-conjugation}. We study properties of this construction, with particular emphasis on the Hopf-Galois analogue of the Galois correspondence, the connection with skew left braces, and applications to questions of integral module structure in extensions of local or global fields. In particular, we show that the number of distinct $ \rho $-conjugates of a given Hopf-Galois structure is determined by the corresponding skew left brace, and that if $ H, H' $ are Hopf algebras giving $ \rho $-conjugate Hopf-Galois structures on a Galois extension of local or global fields $ L/K $ then an ambiguous ideal $ \B $ of $ L $ is free over its associated order in $ H $ if and only if it is free over its associated order in $ H' $. We exhibit a variety of examples arising from interactions with existing constructions in the literature. 
\end{abstract}

\maketitle

\section{Introduction} \label{sec_introduction}

A \textit{Hopf-Galois structure} on a finite extension of fields $ L/K $ consists of a cocommutative $ K $-Hopf algebra $ H $ making $ L $ into an $ H $-module algebra satisfying a certain non-degeneracy condition (see \cite[(2.7) Definition]{TWE} for the full definition). The motivating example is a Galois extension $ L/K $ with Galois group $ G $: in this case the group algebra $ K[G] $ (with its usual action on $ L $) gives a Hopf-Galois structure on the extension. Hopf-Galois structures allow many results from Galois theory to be generalized to extensions which are inseparable or non-normal, but there is also considerable interest in the applications to Galois extensions: various authors have studied the number of Hopf-Galois structures admitted by particular extensions, the structure of the Hopf algebras involved, and questions concerning the Hopf-Galois analogue of the Galois correspondence. We highlight two applications in particular. Firstly, it has recently been discovered that each Hopf-Galois structure on a Galois extension yields a \textit{skew left brace}; these objects are intensively studied because they can be used to generate set-theoretic solutions of the Yang-Baxter equation. The translation of existence, classification, and structural results between Hopf-Galois theory and skew brace theory has enriched both disciplines. Secondly, Hopf-Galois theory has been applied fruitfully to questions originating in Galois module theory: if $ L/K $ is a finite extension of local or global fields and $ H $ is a Hopf algebra giving a Hopf-Galois structure on the extension then we may study each fractional ideal $ \B $ of $ L $ as a module over its \textit{associated order} in $ H $. This raises the possibility of comparing the structure of $ \B $ as a module over its associated orders in the various Hopf-Galois structures admitted by the extension. We refer the reader to \cite{HAGMT} for a recent survey of these and other related topics. 

A theorem of Greither and Pareigis \cite{GP87} classifies the Hopf-Galois structures admitted by a finite separable extension of fields 
$ L/K $. In the special case of  Galois extension with Galois group $ G $ their theorem implies that Hopf-Galois structures admitted by $ L/K $ correspond bijectively with regular subgroups $ N $ of $ \Perm(G) $ that are normalized by the image of the left regular representation $ \lambda : G \hookrightarrow \Perm(G) $. In \cite{KKTU19a} it is noted that such a subgroup $ N $ need not be normalized by the image of \textit{right} regular representation $ \rho : G \hookrightarrow \Perm(G) $, and that for each $ g \in G $ the subgroup $ \rho(g)N\rho(g)^{-1} $ is again regular and normalized by $ \lambda(G) $, and therefore corresponds to a Hopf-Galois structure on $ L/K $. Following \cite{KT22} we call this method of partitioning the set of  Hopf-Galois structures on $ L/K $ \textit{$ \rho $-conjugation}. We review existing results concerning $ \rho $-conjugate Hopf-Galois structures in  more detail in Section \ref{sec_rho_conjugate} of this paper. 

In Section \ref{sec_HG_correspondence} we study the Hopf-Galois analogue of the Galois correspondence, showing that the lattices of subfields realized by $ \rho $-conjugate Hopf-Galois structures are intimately related to one another. Section \ref{sec_skew_braces} concerns the skew braces corresponding to $ \rho $-conjugate Hopf-Galois structures; we show that the number of distinct $ \rho $-conjugates of a given Hopf-Galois structure is determined by the corresponding skew brace, and obtain a criterion for a Hopf-Galois structure to have no nontrivial $ \rho $-conjugates. 

In Section \ref{sec_intergal_module_structure} we suppose that $ L/K $ is a separable extension of local or global fields and study questions of integral module structure. We show that if an ambiguous ideal $ \B $ of $ L $ is free over its associated order in a particular Hopf-Galois structure then it is free over its associated order in each of the Hopf-Galois structures that are $ \rho $-conjugate to the original one. Finally, in Section \ref{sec_interactions} we study how $ \rho $-conjugation interacts with other constructions in the literature, including Hopf-Galois structures arising from fixed point free pairs of homomorphisms (due to Byott and Childs), those arising from abelian maps (instigated by Childs and generalized by Koch), and induced Hopf-Galois structures (due to Crespo, Rio, and Vela).

%{\bf Acknowledgements:} I am grateful to Prof. Anna Rio for the suggestion to study the interaction between $ \rho $-conjugate Hopf-Galois structures and induced Hopf-Galois structures, and to Prof. Alan Koch for comments on an early draft of this paper.  

\section{$ \rho $-conjugate Hopf-Galois structures} \label{sec_rho_conjugate}

In this section we recall the definition of $ \rho $-conjugate Hopf-Galois structures and some of their known properties. 

Let $ L/K $ be a Galois extension of fields with Galois group $ G $. As mentioned in Section \ref{sec_introduction}, a theorem of Griether and Pareigis \cite{GP87} implies that the Hopf-Galois structures admitted by $ L/K $ correspond bijectively with regular subgroups $ N $ of $ \Perm(G) $ that are normalized by the image of $ G $ under the left regular representation $ \lambda : G \hookrightarrow \Perm(G) $.  The normalization condition is equivalent to requiring that $ N $ is stable under the action of $ G $ on $ \Perm(G) $ given by $ g \ast \eta = \lambda(g) \eta \lambda(g)^{-1} $ for all $ \eta \in \Perm(G) $ and all $ g \in G $; accordingly, we shall refer to the subgroups of interest as \textit{$ G $-stable regular subgroups}. 

In general, a $ G $-stable regular subgroup $ N $ of $ \Perm(G) $ need not be normalized by the image of the right regular representation $ \rho : G \hookrightarrow \Perm(G) $. This motivates the following (see also \cite[Example 2.7]{KKTU19a}). 

\begin{proposition} \label{prop_rho_conjugate_G_stable_regular}
Let $ N $ be a $ G $-stable regular subgroup of $ \Perm(G) $, and let $ g \in G $. Then $ N_{g} := \rho(g)N\rho(g)^{-1} $ is a $ G $-stable regular subgroup of $ \Perm(G) $. 
\end{proposition}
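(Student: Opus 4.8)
The plan is to verify the two defining properties of a $G$-stable regular subgroup separately. Regularity should be essentially automatic: conjugation by any $\sigma\in\Perm(G)$ is an automorphism of $\Perm(G)$ that transports the natural action of a subgroup into the action twisted by the bijection $\sigma$. Concretely, I would observe that $\sigma N\sigma^{-1}$ is transitive on $G$ precisely when $N$ is, and that the stabilizer in $\sigma N\sigma^{-1}$ of a point $x\in G$ is $\sigma\,\mathrm{Stab}_{N}(\sigma^{-1}x)\,\sigma^{-1}$, which is trivial precisely when the corresponding stabilizer in $N$ is. Taking $\sigma=\rho(g)$ then shows that $N_{g}$ is regular, and this step in fact uses nothing special about $\rho(g)$.

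The substance of the proposition lies in the $G$-stability of $N_{g}$, and the key ingredient is the classical fact that the images of the left and right regular representations centralize one another in $\Perm(G)$; that is, $\rho(G)\subseteq\Cent_{\Perm(G)}(\lambda(G))$. I would establish this by the direct computation $\lambda(h)\rho(g)(x)=hxg^{-1}=\rho(g)\lambda(h)(x)$ for all $x\in G$, adopting the convention $\rho(g)(x)=xg^{-1}$ so that $\rho$ is a genuine homomorphism. With this in hand, $G$-stability follows by a short manipulation: for any $h\in G$,
\[
\lambda(h)N_{g}\lambda(h)^{-1}=\lambda(h)\rho(g)N\rho(g)^{-1}\lambda(h)^{-1}=\rho(g)\bigl(\lambda(h)N\lambda(h)^{-1}\bigr)\rho(g)^{-1}=\rho(g)N\rho(g)^{-1}=N_{g},
\]
where the middle equality uses that $\lambda(h)$ commutes with $\rho(g)$ and the penultimate equality uses that $N$ is itself $G$-stable.

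The only genuine point requiring care is bookkeeping with conventions --- ensuring that $\rho$ is defined so as to be a homomorphism and confirming that $\lambda(G)$ and $\rho(G)$ commute elementwise --- after which both properties drop out formally. I expect no serious obstacle: the result is really a reflection of the fact that conjugation by $\rho(g)$ preserves both regularity (being conjugation by a permutation of $G$) and the normalizing condition with respect to $\lambda(G)$ (because $\rho(g)$ commutes with everything in $\lambda(G)$).
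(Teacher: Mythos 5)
Your proposal is correct and follows essentially the same route as the paper: regularity is transferred through conjugation by the permutation $\rho(g)$, and $G$-stability follows from the fact that $\lambda(G)$ and $\rho(G)$ centralize one another in $\Perm(G)$, which is exactly the paper's key observation. The only cosmetic difference is that you verify regularity via stabilizers for an arbitrary conjugating permutation, whereas the paper checks transitivity directly and invokes the order count $|N_{g}|=|N|=|G|$.
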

\begin{proof}
Clearly $ N_{g} $ is a subgroup of $ \Perm(G) $ of the same order as $ N $. Since $ N $ acts transitively on $ G $, we have
\[N_{g} [g] =  (N [e_{G}])g^{-1} = G g^{-1} = G, \]
so $ N_{g} $ acts transitively on $ G $. Therefore $ N_{g} $ is a regular subgroup of $ \Perm(G) $. To show $ G $-stability, we note that $ \lambda(G) $ and $ \rho(G) $ centralize one another inside $ \Perm(G) $; hence for $ h \in G $ and $ \eta \in N $ we have
\begin{eqnarray*}
h \ast \rho(g) \eta \rho(g)^{-1} & = & \lambda(h) \rho(g) \eta \rho(g)^{-1} \lambda(h)^{-1} \\
& = & \rho(g) \lambda(h)  \eta  \lambda(h)^{-1}\rho(g)^{-1} \\
& = & \rho(g) (h \ast \eta) \rho(g)^{-1}.
\end{eqnarray*}
We have $ h \ast \eta \in N $ because $ N $ is $ G $-stable; hence $ \rho(g) (h \ast \eta) \rho(g)^{-1} \in N_{g} $, and so $ N_{g} $ is $ G $-stable. 
\end{proof}

\begin{definition}
We shall call two $ G $-stable regular subgroups $ N,N' $ of $ \Perm(G) $ \textit{$\rho $-conjugate} to mean that $ N' = N_{g} $ for some $ g \in G $. In this case we shall also say that the Hopf-Galois structures corresponding to $ N, N' $ are $ \rho $-conjugate to one another. 
\end{definition}

It is clear that $ \rho $-conjugation is an equivalence relation on the set of $ G $-stable regular subgroups of $ \Perm(G) $, and so yields an equivalence relation on the set of Hopf-Galois structures admitted by $ L/K $. Given a $ G $-stable regular subgroup $ N $, we have $ N_{g} = N_{h} $ if and only if $ \rho(g^{-1}h) \in \mathrm{Norm}_{\scriptsize{\Perm(G)}}(N) $. In particular, if $ G $ is abelian then $ \rho(G) = \lambda(G) $, and since $ N $ is $ G $-stable we have $ N_{g}=N $ for all $ g \in G $. We shall return to the question of determining the number of distinct  $ \rho $-conjugates of a given $ G $-stable regular subgroup in Section \ref{sec_skew_braces}.  

\begin{example} \label{example_pq}
Let $ p,q $ be prime numbers with $ p \equiv 1 \pmod{q} $ and let $ L/K $ be a Galois extension whose Galois group $ G $ is isomorphic to the metacyclic group of order $ pq $:
\[ G = \langle s, t \mid s^{p}=t^{q}=e, \; t s t^{-1} = s^{d} \rangle, \]
where $ d $ is a positive integer of multiplicative order $ q $ modulo $ p $. Let $ \eta = \lambda(s) \rho(t) \in \Perm(G) $ and $ N = \langle \eta \rangle $. It is routine to verify that $ N $ is $ G $-stable and regular. We find that $ N_{t} = N $ and that
\[ N_{s^{i}} = \langle \lambda(s) \rho(s^{i(1-d)}t) \rangle \]
for each $ i=0, \ldots ,p-1 $. These subgroups are all distinct, and so we obtain a family of $ p $ mutually $ \rho $-conjugate Hopf-Galois structures on $ L/K $. By \cite[Theorem 6.2]{By04c}, these are all the Hopf-Galois structures admitted by $ L/K $ for which the corresponding subgroup of $ \Perm(G) $ is cyclic (that is: those of cyclic \textit{type}). 
\end{example}

\begin{example} \label{example_dihedral}
Let $ n $ be an even natural number and let $ L/K $ be a Galois extension whose Galois group $ G $ is isomorphic to the dihedral group of order $ 2n $:
\[ G = \langle r,s \mid r^{n}=s^{2}=e, \; srs^{-1} = r^{-1} \rangle. \]
Let $ N = \langle \lambda(r) \rho(s), \lambda(s) \rangle $. Then $ N $ is $ G $ stable, regular, and isomorphic to $ D_{2n} $. We find that $ N_{s} = N $ and that
\[ N_{r^{k}} = \langle \lambda(r) \rho(r^{2k}s), \lambda(s) \rangle \] 
for each $ k=0, \ldots ,n/2-1 $. These subgroups are all distinct, and so we obtain a family of $ n/2 $ mutually $ \rho $-conjugate Hopf-Galois structures on $ L/K $. 
\end{example}

We shall shed more light on these examples, and construct others, in Section \ref{sec_interactions}. 

In Greither and Pareigis's original paper classifying Hopf-Galois structures on separable extensions \cite{GP87} they show that if $ N $ is a $ G $-stable regular subgroup of $ \Perm(G) $ then so too is 
\[ N^{opp} = \mathrm{Cent}_{{\tiny \Perm(G)}}(N). \]
In \cite{KT20} the Hopf-Galois structure corresponding to $ N^{opp} $ is called the \textit{opposite} of the one corresponding to $ N $, and \cite[Corollary 7.2]{KT22} shows that for each $ g \in G $ we have $ (N_{g})^{opp} = (N^{opp})_{g} $. We shall see fruitful applications of this interaction in Section \ref{sec_intergal_module_structure}. 

\begin{example}
Recall the hypotheses and notation of Example \ref{example_dihedral} above. The opposite of the subgroup $ N $ constructed in that example is the subgroup generated by $ \rho(s) $ and the permutation $ \mu_{0} $ defined by
\[ \mu_{0}[r^{i}s^{j}] = r^{i+(-1)^{i+j}} s^{j}. \]
Exploiting the interaction between opposite subgroups and $ \rho $-conjugation, we find that for each $ k=0, \ldots ,n/2-1 $ the opposite of the subgroup $ N_{r^{k}} $ is generated by $ \rho(s) $ and the permutation $ \mu_{k} $ defined by
\[ \mu_{k}[r^{i}s^{j}] = r^{i+(-1)^{i+j+k}} s^{j}. \]
\end{example}

The Hopf algebra giving the Hopf-Galois structure corresponding to a $ G $-stable regular subgroup $ N $ is the fixed ring $ L[N]^{G} $, where $ G $ acts on $ L $ via the usual Galois action and on $ N $ via $ g \ast \eta = \lambda(g) \eta \lambda(g)^{-1} $. It is possible for two distinct Hopf-Galois structures on $ L/K $ to involve isomorphic Hopf algebras, and it is well known (see \cite[Corollary 2.3]{KKTU19a}, for example) that we have $ L[N]^{G} \cong L[N']^{G} $ if and only if there is a group isomorphism $ \phi : N \xrightarrow{\sim} N' $ such that $ h \ast \phi(\eta) = \phi(h \ast \eta) $ for all $ \eta \in N $ and all $ h \in G $. The proof of Proposition \ref{prop_rho_conjugate_G_stable_regular} shows that the isomorphism $ \phi : N \rightarrow N_{g} $ defined by $ \phi(\eta) = \rho(g)\eta\rho(g)^{-1} $ has this property, and so $ L[N]^{G} \cong L[N_{g}]^{G} $ as $ K $-Hopf algebras (see \cite[Example 2.7]{KKTU19a}). More explicitly: $ \phi $ induces an isomorphism of $ L $-Hopf algebras $ \phi : L[N] \xrightarrow{\sim} L[N_{g}] $, which descends to an isomorphism of $ K $-Hopf algebras $ \phi : L[N]^{G} \xrightarrow{\sim} L[N_{g}]^{G} $, as follows 

\begin{equation} \label{eqn_HA_isomorphism}
\phi \left( \sum_{\eta \in N} c_{\eta} \right) = \sum_{\eta \in N} c_{\eta} \rho(g) \eta \rho(g)^{-1}. 
\end{equation}

\section{The Hopf-Galois correspondence} \label{sec_HG_correspondence}

If a Hopf algebra $ H $ gives a Hopf-Galois structure on a finite extension of fields $ L/K $ then each Hopf subalgebra $ H' $ of $ H $ has a corresponding fixed field
\[ L^{H'} = \{ x \in L \mid z \cdot x = \varepsilon(z)x \mbox{ for all } z \in H' \}, \]
where $ \varepsilon: H \rightarrow K $ denotes the counit map of $ H $. In analogy with the classical Galois correspondence, we have $ [L:L^{H'}] = \dim_{K}(H') $. The correspondence from Hopf subalgebras of $ H $ to intermediate fields of the extension $ L/K $ is inclusion reversing and injective, but not surjective in general; we say that an intermediate field $ M $ is \textit{realizable with respect to $ H $} to mean that there is a Hopf subalgebra $ H' $ of $ H $ such that $ L^{H'} = M $. 

Specializing to the case in which $ L/K $ is a Galois extension, we have seen that each Hopf algebra giving a Hopf-Galois structure on $ L/K $ has the form $ L[N]^{G} $ with $ N $ some $ G $-stable regular subgroup of $ \Perm(G) $; in this case the Hopf subalgebras of $ L[N]^{G} $ are precisely the sets $ L[P]^{G} $ with $ P $ a subgroup of $ N $ that is itself $ G $-stable (see \cite[Theorem 2.3]{CRV16b} or \cite[Theorem 7.2]{HAGMT}). Abusing notation, we write $ L^{P} $ in place of of $ L^{L[P]^{G}} $; since $ \dim_{K}(L[P]^{G}) = |P| $, we then have $ [L:L^{P}]=|P| $. Now let $ g \in G $ and consider the $ G $-stable regular subgroup $ N_{g} $ and the corresponding Hopf-Galois structure, with Hopf algebra $ L[N_{g}]^{G} $. By virtue of the Hopf algebra isomorphism $ \phi : L[N]^{G} \xrightarrow{\sim} L[N_{g}]^{G} $ given in Equation \ref{eqn_HA_isomorphism}, the lattices of Hopf subalgebras of $ L[N]^{G} $ and $ L[N_{g}]^{G} $ are isomorphic; hence the lattices of intermediate fields of $ L/K $ realizable with respect to these Hopf-Galois structures are isomorphic. We can give a more precise statement by studying the actions of $ L[N]^{G} $ and $ L[N_{g}]^{G} $ on $ L $ in more detail. 

It follows from the theorem of Greither and Pareigis that the action of an element $ \sum_{\eta \in N} c_{\eta} \eta \in L[N]^{G} $ on $ L $ is given by 
\begin{equation} \label{eqn_GP_action}
\left( \sum_{\eta \in N} c_{\eta} \eta \right) \cdot x = \sum_{\eta \in N} c_{\eta} \eta^{-1}[e_{G}](x) \mbox{ for all } x \in L 
\end{equation}
(see \cite[Proposition 4.14]{HAGMT}, for example). The action of $ L[N_{g}]^{G} $ on $ L $ is given by an analogous formula. We describe how the isomorphism $ \phi $ interacts with these actions. 

\begin{proposition} \label{prop_conjugation_action}
Let $ x \in L $. Then for all $ z \in L[N]^{G} $ we have
\[ \phi(z) \cdot x = g ( z \cdot g^{-1}(x) ). \]
\end{proposition}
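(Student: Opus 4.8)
The plan is to read the claim as an intertwining identity: writing $\tilde{g}$ for the Galois action of $g$ on $L$, the assertion is precisely that $\phi(z)\cdot(-) = \tilde{g}\circ\bigl(z\cdot(-)\bigr)\circ\tilde{g}^{-1}$ as $K$-linear endomorphisms of $L$. Since both sides are additive in $z$, I would fix a general element $z=\sum_{\eta\in N}c_{\eta}\eta\in L[N]^{G}$ and expand each side into a sum indexed by $N$, using only the two formulas already available: the definition of $\phi$ in Equation \ref{eqn_HA_isomorphism}, and the Greither--Pareigis action formula in Equation \ref{eqn_GP_action}, applied to $N$ on the right and to $N_{g}$ on the left. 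The goal is then to show that the two resulting sums coincide after a suitable reindexing.

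For the left-hand side this is a direct calculation. Since $\phi(z)=\sum_{\eta}c_{\eta}\,\rho(g)\eta\rho(g)^{-1}$ and every element of $N_{g}$ has this shape, the action formula gives $\phi(z)\cdot x=\sum_{\eta}c_{\eta}\,\bigl(\rho(g)\eta^{-1}\rho(g)^{-1}\bigr)[e_{G}](x)$, where the group element $\bigl(\rho(g)\eta^{-1}\rho(g)^{-1}\bigr)[e_{G}]\in G$ acts on $x$ through the Galois action. The first key step is to evaluate this permutation at $e_{G}$: reading the composite from right to left and using that $\rho(g)^{-1}[e_{G}]=g$, one finds it equals the group element $\bigl(\eta^{-1}[g]\bigr)g^{-1}$. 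Thus the left-hand side reduces cleanly to $\sum_{\eta}c_{\eta}\cdot\bigl((\eta^{-1}[g])g^{-1}\bigr)(x)$.

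The right-hand side is where the real work lies. Expanding $g\bigl(z\cdot g^{-1}(x)\bigr)$ via the action formula and distributing the automorphism $g$ across the products produces $\sum_{\eta}g(c_{\eta})\cdot\bigl(g\,(\eta^{-1}[e_{G}])\,g^{-1}\bigr)(x)$. The hard part is that the coefficients now appear as $g(c_{\eta})$ rather than $c_{\eta}$, so the identity fails term by term and cannot hold for an arbitrary element of $L[N]$; the $G$-invariance of $z$ is exactly what must be exploited. Translating $z\in L[N]^{G}$ into a condition on coefficients yields $g(c_{\eta})=c_{\lambda(g)\eta\lambda(g)^{-1}}$, and I would substitute this and then reindex the sum by $\mu=\lambda(g)\eta\lambda(g)^{-1}$, which is a bijection of $N$ because the $G$-stable subgroup $N$ is normalized by $\lambda(g)$. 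It then remains to simplify the accompanying group element $g\,\bigl(\lambda(g)^{-1}\mu^{-1}\lambda(g)\bigr)[e_{G}]\,g^{-1}$; here the crucial input is that $\lambda(G)$ and $\rho(G)$ centralize one another, which is what lets the $\lambda$-conjugation coming from $G$-invariance interact cleanly with the $\rho$-conjugation built into $\phi$. After this simplification the group element collapses to $(\mu^{-1}[g])g^{-1}$, so the right-hand side becomes $\sum_{\mu}c_{\mu}\cdot\bigl((\mu^{-1}[g])g^{-1}\bigr)(x)$, matching the left-hand side and completing the proof.
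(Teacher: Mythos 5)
Your proof is correct and is essentially the paper's own argument: both rest on the Greither--Pareigis action formula, the $G$-invariance of $z$ (which you encode as the coefficient identity $g(c_{\eta})=c_{\lambda(g)\eta\lambda(g)^{-1}}$ and the paper encodes as $z=g\ast z$), and the fact that $\lambda(G)$ and $\rho(G)$ centralize each other. The only difference is organizational --- you expand both sides and match them after reindexing, whereas the paper rewrites $\phi(z)$ first and runs a single chain of equalities --- and all of your intermediate evaluations check out.
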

\begin{proof}
Write $ z = \sum_{\eta \in N} c_{\eta} \eta $ with $ c_{\eta} \in L $. Since $ z \in L[N]^{G} $ we have $ z = h \ast z $ for all $ h \in G $. In particular, 
\[ z = g \ast z = \sum_{\eta \in N} g(c_{\eta}) \lambda(g) \eta \lambda(g)^{-1}, \]
and so
\begin{eqnarray*}
\phi(z) & = & \sum_{\eta \in N} g(c_{\eta}) \rho(g) \lambda(g) \eta \lambda(g)^{-1} \rho(g)^{-1} \\
& = & \sum_{\eta \in N} g(c_{\eta})  \lambda(g)\rho(g) \eta \rho(g)^{-1}\lambda(g)^{-1}  \mbox{ by Proposition \ref{prop_rho_conjugate_G_stable_regular}.}
\end{eqnarray*}
Therefore 
\begin{eqnarray*}
\phi(z) \cdot x & = & \left( \sum_{\eta \in N} g(c_{\eta})  \lambda(g)\rho(g) \eta \rho(g)^{-1}\lambda(g)^{-1} \right) \cdot x \\
& = & \sum_{\eta \in N} g(c_{\eta})  \lambda(g)\rho(g) \eta^{-1} \rho(g)^{-1}\lambda(g)^{-1}[e_{G}](x) \\
& = & \sum_{\eta \in N} g(c_{\eta})  \lambda(g)\rho(g) \eta^{-1} [g^{-1}g] (x) \\
& = & \sum_{\eta \in N} g(c_{\eta})  g \eta^{-1} [e_{G}] g^{-1} (x) \\
& = & g \left( \sum_{\eta \in N} c_{\eta} \eta^{-1} [e_{G}] g^{-1} (x) \right) \\
& = & g \left( \sum_{\eta \in N} c_{\eta} \eta \right) \cdot g^{-1} (x)  \\
& = & g \left( z  \cdot g^{-1} (x)\right),
\end{eqnarray*}
as claimed. 
\end{proof}

Now we have the following:

\begin{theorem}
An intermediate field $ M $ of $ L/K $ is realizable with respect to $ L[N]^{G} $ if and only if the intermediate field $ g(M) $ is realizable with respect to $ L[N_{g}]^{G} $. 
\end{theorem}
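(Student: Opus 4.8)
The plan is to exploit the Hopf algebra isomorphism $ \phi : L[N]^{G} \xrightarrow{\sim} L[N_{g}]^{G} $ of Equation \ref{eqn_HA_isomorphism} to set up a bijection between the realizable fields of the two structures, and then to pin that bijection down explicitly using Proposition \ref{prop_conjugation_action}.

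First I would recall that realizability is governed by $ G $-stable subgroups: a field $ M $ is realizable with respect to $ L[N]^{G} $ precisely when $ M = L^{P} $ for some $ G $-stable subgroup $ P $ of $ N $, and likewise the fields realizable with respect to $ L[N_{g}]^{G} $ are exactly the $ L^{Q} $ with $ Q $ a $ G $-stable subgroup of $ N_{g} $. The isomorphism $ \phi $ carries each such $ P $ to $ P_{g} = \rho(g) P \rho(g)^{-1} $, which is a $ G $-stable subgroup of $ N_{g} $ by the argument of Proposition \ref{prop_rho_conjugate_G_stable_regular}; moreover $ \phi(L[P]^{G}) = L[P_{g}]^{G} $, and $ P \mapsto P_{g} $ is a bijection between the $ G $-stable subgroups of $ N $ and those of $ N_{g} $, its inverse being conjugation by $ \rho(g)^{-1} $. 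Thus it suffices to identify the fixed field of $ L[P_{g}]^{G} $ as $ g(L^{P}) $.

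To establish this I would compute directly. Fix a $ G $-stable subgroup $ P $ of $ N $ and write $ H' = L[P]^{G} $, so that $ \phi(H') = L[P_{g}]^{G} $. An element $ y \in L $ lies in $ L^{P_{g}} $ if and only if $ w \cdot y = \varepsilon(w) y $ for all $ w \in \phi(H') $, equivalently $ \phi(z) \cdot y = \varepsilon(\phi(z)) y $ for all $ z \in H' $. Since $ \phi $ is a morphism of Hopf algebras it preserves counits, so $ \varepsilon(\phi(z)) = \varepsilon(z) $; and by Proposition \ref{prop_conjugation_action} we have $ \phi(z) \cdot y = g(z \cdot g^{-1}(y)) $. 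Applying $ g^{-1} $ and using that $ \varepsilon(z) \in K $ is fixed by $ G $, the condition becomes $ z \cdot g^{-1}(y) = \varepsilon(z) g^{-1}(y) $ for all $ z \in H' $, which says exactly that $ g^{-1}(y) \in L^{P} $. Hence $ y \in L^{P_{g}} $ if and only if $ y \in g(L^{P}) $, giving $ L^{P_{g}} = g(L^{P}) $.

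Combining these, $ M = L^{P} $ is realizable with respect to $ L[N]^{G} $ if and only if $ g(M) = g(L^{P}) = L^{P_{g}} $ is realizable with respect to $ L[N_{g}]^{G} $, which is the claim. The only real content is the fixed-field computation of the previous paragraph; the main thing to be careful about there is the bookkeeping of the counit and of the direction of the Galois twist, both of which are handled cleanly by Proposition \ref{prop_conjugation_action}. I do not anticipate a serious obstacle, since the structural bijection between Hopf subalgebras is automatic from $ \phi $ being an isomorphism, and the fixed-field identity follows by a short manipulation once the action formula is in hand.
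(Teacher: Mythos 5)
Your proposal is correct and follows essentially the same route as the paper: identify realizable fields with $G$-stable subgroups, note that $\phi$ sends $L[P]^{G}$ to $L[P_{g}]^{G}$, and use Proposition \ref{prop_conjugation_action} to compute $L^{P_{g}} = g(L^{P})$. Your explicit remarks that $\phi$ preserves counits and that $P \mapsto P_{g}$ is a bijection (handling both directions at once, where the paper instead swaps $N$ with $N_{g}$ and $g$ with $g^{-1}$ for the converse) are harmless refinements of the same argument.
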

\begin{proof}
Suppose that $ M $ is realizable with respect to $ L[N]^{G} $. Then $ M = L^{P} $ for some $ G $-stable subgroup $ P $ of $ N $. We have $ \phi(L[P]^{G}) = L[P_{g}]^{G} $, a Hopf subalgebra of $ L[N_{g}]^{G} $, and for $ x \in L $ we have
\begin{eqnarray*}
x \in L^{P_{g}} & \Leftrightarrow & \phi(z) \cdot x = \varepsilon(z)x \mbox{ for all } z \in L[P]^{G} \\
& \Leftrightarrow & g ( z \cdot g^{-1}(x) ) = \varepsilon(z) x \mbox{ for all } z \in L[P]^{G} \mbox{ by Proposition \ref{prop_conjugation_action}} \\
& \Leftrightarrow & z \cdot g^{-1}(x) = \varepsilon(z) g^{-1} (x) \mbox{ for all } z \in L[P]^{G} \\
& \Leftrightarrow & g^{-1}(x) \in L^{P} = M \\
& \Leftrightarrow & x \in g(M). 
\end{eqnarray*}
Hence $ g(M) $ is realizable with respect to $ L[N_{g}]^{G} $. For the converse statement we replace $ N $ by $ N_{g} $ and $ g $ by $ g^{-1} $ in the argument above. 
\end{proof}

\begin{example}
Recall the hypotheses and notation of Example \ref{example_pq} above. Each of the subgroups $ N_{s^{i}} $ is cyclic of order $ pq $, and therefore has a unique subgroup $ P_{i} $ of order $ p $ and a unique subgroup $ Q_{i} $ of order $ q $; by uniqueness, each of these is $ G $-stable. Since $ N_{s^{i}} $ is generated by $ \lambda(s) \rho(s^{i(1-d)}t) $, we see that $ P_{i} = \langle \lambda(s) \rangle $ regardless of $ i $, and that $ Q_{i} = \langle \rho(s^{i(1-d)}t) \rangle $. Therefore $ L^{P_{i}} = L^{\langle s \rangle} $ (the unique intermediate field of degree $ p $ over $ K $) and $ L^{Q} = L^{\langle s^{i(1-d)}t \rangle} $. Thus each of this family of $ \rho $-conjugate Hopf-Galois structures realizes the intermediate fields $ L, K, L^{\langle s \rangle} $, and a unique intermediate field of degree $ q $ over $ K $, and each intermediate field of degree $ q $ over $ K $ is realized by exactly one Hopf-Galois structure in this family. 
\end{example}

\section{Skew braces} \label{sec_skew_braces}

A (left) skew brace is triple $ (B,\star,\circ) $ where $ B $ is a set and $\star,\circ $ are binary operations on $ B $, each making $ B $ into a group, such that
\begin{equation} \label{eqn_brace_relation}
x \circ (y \star z) = (x \circ y) \star x^{-1} \star (x \circ z) \mbox{ for all } x,y,z \in B.
\end{equation}
Here $ x^{-1} $ denotes the inverse of $ x $ with respect to $ \star $; we denote the inverse of $ x $ with respect to $ \circ $ by $ \bar{x} $. We call Equation \ref{eqn_brace_relation} the \textit{(left) skew brace relation}; the right skew brace relation is analogous. We call a skew brace \textit{two-sided} if both the left and right skew brace relations hold for all triples of elements of $ B $. Skew braces with underlying set $ B $ yield solutions of the set theoretic Yang-Baxter equation on $ B $; we can then obtain solutions of the Yang-Baxter equation over any field by using $ B $ as the basis for a vector space (see \cite[\S 2]{KST20}, for example).

The connection between skew braces and Hopf-Galois structures on Galois extensions is noted by Bachiller \cite{Ba16}. One formulation is as follows: a finite skew brace $ (B,\star,\circ) $, yields two left regular representation maps $ \lambda_{\star}, \lambda_{\circ} : B \hookrightarrow \Perm(B) $; we find that $ \lambda_{\star}(B) $ is a $ (B,\circ) $-stable regular subgroup of $ \Perm(B) $, and so corresponds to a Hopf-Galois structure on a Galois extension with Galois group $ (B,\circ) $. Conversely, if $ G = (G,\circ) $ is a finite group and $ N $ is a $ G $-stable regular subgroup of $ \Perm(G) $ then the map $ \eta \mapsto \eta[e_{G}] $ is a bijection from $ N $ to $ G $; we use this to define a second binary operation $ \star $ on $ G $ by
\[ \eta[e_{G}] \star \mu[e_{G}] = \eta\mu[e_{G}]. \]
Then $ (G,\star) \cong N $, and the fact that $ N $ is $ G $-stable implies that $ (G, \star, \circ) $ is a skew brace. As noted in \cite[Proposition 2.1]{NZ19} and \cite[Proposition 3.1]{KT22}, two $ G $-stable regular subgroups $ N, N' \leq \Perm(G) $ yield isomorphic skew braces if and only if $ N' = \varphi^{-1} N \varphi $ for some $ \varphi \in \Aut(G,\circ) $, yield the same skew brace if and only if $ N' = \varphi^{-1} N \varphi $ for some $ \varphi \in \Aut(G, \star, \circ) $ (the group of automorphisms of $ (G,\circ) $ that also respect $ \star $). 

In \cite[Proposition 4.1]{KT22} we find the following alternative formulation of $ \rho $-conjugate subgroups, which enables us to study the corresponding skew braces. 

\begin{proposition} \label{prop_inner_automorphism_formulation}
Let $ N $ be a $ G $-stable regular subgroup of $ \Perm(G) $, and let $ g \in G $. Then $ N_{g} = \varphi_{g} N \varphi_{g}^{-1} $, where $ \varphi_{g} $ is the inner automorphism of $ G $ corresponding to $ g $.
\end{proposition}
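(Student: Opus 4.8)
The plan is to express the inner automorphism $ \varphi_{g} $, regarded as an element of $ \Perm(G) $, in terms of the regular representations $ \lambda $ and $ \rho $, and then to let the $ G $-stability of $ N $ absorb the extra factor. Recall that, under the conventions $ \lambda(g)[x] = gx $ and $ \rho(g)[x] = xg^{-1} $ (for which $ \lambda $ and $ \rho $ are commuting homomorphisms into $ \Perm(G) $), the inner automorphism acts as $ \varphi_{g}[x] = gxg^{-1} $.

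First I would record the elementary identity
\[ \varphi_{g} = \lambda(g)\rho(g) = \rho(g)\lambda(g), \]
where the two products agree because $ \lambda(G) $ and $ \rho(G) $ centralize one another in $ \Perm(G) $, as already used in the proof of Proposition \ref{prop_rho_conjugate_G_stable_regular}. Indeed, applying $ \lambda(g)\rho(g) $ to an arbitrary $ x \in G $ gives $ \lambda(g)[xg^{-1}] = gxg^{-1} = \varphi_{g}[x] $, which verifies the first equality, and the second then follows from the centralizing property.

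Next I would conjugate $ N $ by $ \varphi_{g} $ using this factorization. Writing $ \varphi_{g} = \rho(g)\lambda(g) $ and peeling off the outer $ \rho(g) $ factors, we obtain
\[ \varphi_{g} N \varphi_{g}^{-1} = \rho(g)\bigl( \lambda(g) N \lambda(g)^{-1} \bigr)\rho(g)^{-1}. \]
Since $ N $ is $ G $-stable, the normalization condition gives $ \lambda(g) N \lambda(g)^{-1} = g \ast N = N $, so the inner bracket collapses to $ N $ and the right-hand side becomes $ \rho(g) N \rho(g)^{-1} = N_{g} $, as required.

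The argument is short, and there is no real obstacle beyond fixing the permutation conventions consistently: the entire content is the observation that the inner automorphism differs from the right translation $ \rho(g) $ only by the left translation $ \lambda(g) $, and that $ \lambda(g) $ normalizes $ N $ precisely because $ N $ is $ G $-stable. It is worth noting in passing that this computation also explains why $ \rho $-conjugation is visible at the level of the associated skew brace: conjugation by an inner automorphism of $ (G,\circ) $ is exactly the operation appearing in the brace-isomorphism criterion recalled before the proposition, so $ \rho $-conjugate subgroups automatically yield isomorphic skew braces.
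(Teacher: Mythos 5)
Your proof is correct and is essentially identical to the paper's own argument: both factor the inner automorphism as $\varphi_{g} = \rho(g)\lambda(g)$ (using that $\lambda(G)$ and $\rho(G)$ centralize each other) and then use $G$-stability to absorb the $\lambda(g)$-conjugation, leaving $\rho(g)N\rho(g)^{-1} = N_{g}$. The only difference is that you spell out the verification of the factorization and the permutation conventions, which the paper takes as read.
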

\begin{proof}
We may write $ \varphi_{g} = \rho(g) \lambda(g) $, and we then have
\begin{eqnarray*}
\varphi_{g} N \varphi_{g}^{-1} & = & \rho(g) \lambda(g) N \lambda(g)^{-1} \rho(g)^{-1} \\
& = & \rho(g) N \rho(g)^{-1} \mbox{ since $ N $ is $ G $-stable } \\
& = & N_{g}. 
\end{eqnarray*}
\end{proof}

This observation immediately implies that $ \rho $-conjugate Hopf-Galois structures yield isomorphic skew braces under the correspondence described above (\cite[Corollary 4.1]{KT22}). 

The number of distinct $ \rho $-conjugates of a given $ G $-stable regular subgroup $ N $ of $ \Perm(G) $ can studied via the corresponding skew brace.

\begin{theorem} \label{thm_skew_brace_main}
Let $ N $ be a $ G $-stable regular subgroup of $ \Perm(G) $, let $ (G,\star,\circ) $ be the corresponding skew brace, and let $ g \in G $. The following are equivalent:
\begin{enumerate}
\item The $ G $-stable regular subgroup $ N $ is normalized by $ \rho(g) $;
\item The inner automorphism $ \varphi_{g} $ of $ \Aut(G,\circ) $ is an element of $ \Aut(G,\star,\circ) $;
\item We have $ (y \star z) \circ g = (y \circ g ) \star g^{-1} \star (z \circ g) $ for all $ y,z \in G $. 
\end{enumerate}
\end{theorem}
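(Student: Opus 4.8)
The plan is to establish the two equivalences (i)$\Leftrightarrow$(ii) and (ii)$\Leftrightarrow$(iii) separately. Throughout I would work with the concrete description of the skew brace attached to $N$: the bijection $\eta \mapsto \eta[e_{G}]$ identifies $N$ with $(G,\star)$, and writing $\eta_{a}$ for the unique element of $N$ with $\eta_{a}[e_{G}]=a$, one checks directly that $\eta_{a}[b]=a\star b$ and $\eta_{a}\eta_{b}=\eta_{a\star b}$; that is, $N$ is precisely the image of the left regular representation of $(G,\star)$, and $N$ is recovered from the operation $\star$.

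For (i)$\Leftrightarrow$(ii) I would first invoke Proposition \ref{prop_inner_automorphism_formulation} to replace $\rho(g)$ by $\varphi_{g}$: since $N_{g}=\varphi_{g}N\varphi_{g}^{-1}$ and $N_{g}:=\rho(g)N\rho(g)^{-1}$, the subgroup $N$ is normalized by $\rho(g)$ if and only if it is normalized by $\varphi_{g}$, that is, $N_{g}=N$. The heart of the argument is then a transport-of-structure computation. Because $\varphi_{g}\in\Aut(G,\circ)$ fixes $e_{G}$, conjugation gives $(\varphi_{g}\eta_{a}\varphi_{g}^{-1})[e_{G}]=\varphi_{g}(a)$, so the element of $N_{g}$ sending $e_{G}$ to $\varphi_{g}(a)$ is $\varphi_{g}\eta_{a}\varphi_{g}^{-1}$; combining this with $\eta_{a}\eta_{b}=\eta_{a\star b}$ yields
\[ \varphi_{g}(a)\star'\varphi_{g}(b)=\varphi_{g}(a\star b) \mbox{ for all } a,b\in G, \]
where $\star'$ denotes the brace operation attached to $N_{g}$. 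Since a $G$-stable regular subgroup is recovered from its brace operation, $N_{g}=N$ holds exactly when $\star'=\star$; by the displayed identity this is equivalent to $\varphi_{g}$ respecting $\star$, and as $\varphi_{g}$ already respects $\circ$ this says precisely $\varphi_{g}\in\Aut(G,\star,\circ)$, which is (ii).

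For (ii)$\Leftrightarrow$(iii) I would unwind (ii) using $\varphi_{g}(x)=g\circ x\circ\bar{g}$, so that it reads $g\circ(y\star z)\circ\bar{g}=(g\circ y\circ\bar{g})\star(g\circ z\circ\bar{g})$. Composing on the right with $\circ\,g$ cancels the trailing $\bar{g}$ on the left, and a single application of the left skew brace relation (Equation \ref{eqn_brace_relation}) rewrites $g\circ(y\star z)$ as $(g\circ y)\star g^{-1}\star(g\circ z)$. The resulting identity is converted into (iii) by two bijective substitutions, first $u=g\circ y,\ v=g\circ z$ and then $Y=u\circ\bar{g},\ Z=v\circ\bar{g}$; tracking these produces exactly $(Y\star Z)\circ g=(Y\circ g)\star g^{-1}\star(Z\circ g)$, and since every step is reversible the equivalence follows.

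I expect the bookkeeping in (ii)$\Leftrightarrow$(iii) to be the main obstacle, for two reasons. First, one must keep the two inverses apart throughout: the $\circ$-inverse $\bar{g}$ entering through $\varphi_{g}$, and the $\star$-inverse $g^{-1}$ entering through the brace relation and through (iii). Second, $\circ$-conjugation does not distribute over $\star$ in any naive way, so the only leverage available is the left skew brace relation; recognizing that exactly one application of Equation \ref{eqn_brace_relation}, sandwiched between the right-multiplication step and the substitutions, is what converts $\circ$-conjugation into the $\star$-expression of (iii) is the key point. By contrast, the analogous manipulation in (i)$\Leftrightarrow$(ii) is routine once $N$ has been identified with the left regular representation of $(G,\star)$.
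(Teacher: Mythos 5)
Your proposal is correct and follows the same decomposition as the paper: (i)$\Leftrightarrow$(ii) via Proposition \ref{prop_inner_automorphism_formulation}, and (ii)$\Leftrightarrow$(iii) via the left skew brace relation \eqref{eqn_brace_relation} together with conjugation by $\varphi_{g}$. The differences are organizational but worth noting. For (i)$\Leftrightarrow$(ii) the paper simply cites the earlier discussion (the result of Nejabati Zenouz and Koch--Truman that $G$-stable regular subgroups conjugate under $\Aut(G,\star,\circ)$ yield the same skew brace), whereas you prove the needed fact directly by transport of structure, identifying $N$ with $\lambda_{\star}(G)$ and showing that the brace operation of $N_{g}$ is $\varphi_{g}(a)\star'\varphi_{g}(b)=\varphi_{g}(a\star b)$; this makes the step self-contained, and your observation that a regular subgroup is recovered from its brace operation correctly closes the loop from $\star'=\star$ back to $N_{g}=N$. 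For (ii)$\Leftrightarrow$(iii) the paper argues the two implications separately, and the direction (iii)$\Rightarrow$(ii) requires the auxiliary skew brace identity $\bar{g}^{-1}\star(\bar{g}\circ g^{-1})\star\bar{g}^{-1}=(\bar{g}\circ g)^{-1}=e_{G}$ from Guarneri--Vendramin. Your version replaces this with a single chain of reversible equivalences --- right-compose with $g$ under $\circ$, apply \eqref{eqn_brace_relation} once to $g\circ(y\star z)$, then perform the bijective substitution $Y=g\circ y\circ\bar{g}$, $Z=g\circ z\circ\bar{g}$ --- which I have checked lands exactly on (iii) and dispenses with the auxiliary identity altogether. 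This is a slightly cleaner route to the same equivalence; what it gives up is only the explicit display of how (iii) is an instance of the ``right skew brace relation at $g$,'' which the paper's two-directional treatment makes somewhat more visible.
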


\begin{corollary}
Let $ G' = \{ g \in G \mid \varphi_{g} \in \Aut(G,\star,\circ) \} $. Then the $ G $-stable regular subgroup $ N $ has $ |G|/|G'| $ distinct $ \rho $-conjugates.
\end{corollary}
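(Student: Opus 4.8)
The plan is to identify the distinct $ \rho $-conjugates of $ N $ with the left cosets of $ G' $ in $ G $, so that the count follows from Lagrange's theorem. The first step is to observe that $ G' $ is in fact a subgroup of $ G $. By the equivalence of conditions (1) and (2) in Theorem \ref{thm_skew_brace_main}, we have
\[ G' = \{ g \in G \mid \rho(g) \in \Norm_{\Perm(G)}(N) \}. \]
Since $ \rho : G \hookrightarrow \Perm(G) $ is a group homomorphism and $ \Norm_{\Perm(G)}(N) $ is a subgroup of $ \Perm(G) $, the set $ G' $ is the preimage $ \rho^{-1}(\Norm_{\Perm(G)}(N)) $ and is therefore a subgroup of $ G $. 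Alternatively one may argue directly that $ g \mapsto \varphi_{g} $ is a homomorphism into $ \Aut(G,\circ) $ and that $ \Aut(G,\star,\circ) $ is a subgroup, whence its preimage $ G' $ is a subgroup.

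Next I would pin down when two conjugates coincide. Recall from Section \ref{sec_rho_conjugate} that $ N_{g} = N_{h} $ if and only if $ \rho(g^{-1}h) \in \Norm_{\Perm(G)}(N) $; combining this with the description of $ G' $ above gives
\[ N_{g} = N_{h} \iff g^{-1}h \in G' \iff gG' = hG'. \]
Thus the map $ g \mapsto N_{g} $ from $ G $ to the set of $ \rho $-conjugates of $ N $ is constant on each left coset of $ G' $ and takes distinct values on distinct cosets; that is, it descends to a bijection between $ G/G' $ and the set of distinct $ \rho $-conjugates. Since $ G $ is finite, the number of such cosets is $ [G:G'] = |G|/|G'| $, which is the desired count.

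The argument is short because the substantive work is already carried out in Theorem \ref{thm_skew_brace_main} and in the coincidence criterion for $ N_{g} = N_{h} $. The only points requiring care are verifying that $ G' $ is genuinely a subgroup --- which rests on $ \rho $ being a homomorphism rather than an anti-homomorphism --- and keeping track of the fact that the fibres of $ g \mapsto N_{g} $ are left cosets of $ G' $, so that the index, and not some other quantity, appears. Both are routine, so I do not anticipate a genuine obstacle.
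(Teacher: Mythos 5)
Your argument is correct and is exactly the reasoning the paper leaves implicit: the corollary is stated without proof as an immediate consequence of Theorem \ref{thm_skew_brace_main}, and the intended justification is precisely your identification of $ G' $ with $ \rho^{-1}(\Norm_{\Perm(G)}(N)) $ together with the coincidence criterion $ N_{g}=N_{h} \iff \rho(g^{-1}h)\in\Norm_{\Perm(G)}(N) $ from Section \ref{sec_rho_conjugate}, giving a bijection between the distinct $ \rho $-conjugates and the cosets of $ G' $. No gaps; your care about $ \rho $ being a homomorphism and about left cosets is warranted but, as you say, routine.
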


\begin{corollary}
The $ G $-stable regular subgroup $ N $ is normalized by $ \rho(G) $ if and only if $ (G,\star,\circ) $ is a two-sided skew brace. 
\end{corollary}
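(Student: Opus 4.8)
The plan is to deduce the statement from Theorem \ref{thm_skew_brace_main} by applying it to every element of $ G $ simultaneously. The subgroup $ N $ is normalized by $ \rho(G) $ precisely when $ \rho(g) N \rho(g)^{-1} = N $ for every $ g \in G $, that is, when $ N $ is normalized by $ \rho(g) $ for each individual $ g $. So I would begin by invoking the equivalence of conditions (i) and (iii) of that theorem, quantified over all $ g \in G $. This shows that $ N $ is normalized by $ \rho(G) $ if and only if $ (y \star z) \circ g = (y \circ g) \star g^{-1} \star (z \circ g) $ holds for all $ y, z, g \in G $.

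The decisive step is then to recognize the displayed identity as exactly the right skew brace relation holding for all triples. Writing the left skew brace relation as $ x \circ (y \star z) = (x \circ y) \star x^{-1} \star (x \circ z) $, its analogue with the distinguished element placed on the right of the $ \circ $-operation is $ (y \star z) \circ x = (y \circ x) \star x^{-1} \star (z \circ x) $; setting $ x = g $ reproduces precisely the identity obtained in the first step. Since $ (G,\star,\circ) $ is a skew brace by construction, the left skew brace relation already holds for all triples, so the right relation holding for all triples is exactly the condition that $ (G,\star,\circ) $ be two-sided. Combining the two steps yields the claimed equivalence.

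The only point that I expect to require genuine care is the bookkeeping in the second step, namely confirming that condition (iii) matches the right skew brace relation under the paper's convention that the right relation is the ``analogue'' of the stated left one. Concretely, one must check that $ g $ assumes the role of the distinguished element in the right relation and that $ g^{-1} $ is correctly read as its $ \star $-inverse, with $ y $ and $ z $ as the remaining arguments in the same order on both sides. Because the right relation is simply the mirror image of the explicitly stated left relation, this identification is purely formal and involves no computation beyond aligning the symbols; I would record it as a one-line observation rather than a separate lemma.
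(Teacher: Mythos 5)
Your argument is correct and is exactly how the paper intends the corollary to follow: the paper states it without proof as an immediate consequence of Theorem \ref{thm_skew_brace_main}, obtained by quantifying the equivalence of (i) and (iii) over all $ g \in G $ and recognizing condition (iii) as the right skew brace relation. Your bookkeeping of the roles of $ g $ and $ g^{-1} $ matches the paper's convention, so nothing further is needed.
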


\begin{proof}[Proof of Theorem \ref{thm_skew_brace_main}.]
By Proposition \ref{prop_inner_automorphism_formulation} we have $ N_{g} = \varphi_{g}N\varphi_{g}^{-1} $, and the discussion above then implies that $ N_{g} = N $ if and only if $ \varphi_{g} \in \Aut(G,\star,\circ) $. Hence (i) and (ii) are equivalent. 

Now suppose that (ii) holds. By the left skew brace relation \eqref{eqn_brace_relation}, for all $ y,z \in G $ we have 
\[ g \circ (y \star z) = (g \circ y) \star g^{-1} \star (g \circ z). \]
Since $ \varphi_{g} \in \Aut(G,\star,\circ) $, so is $ \varphi_{g}^{-1} = \varphi_{\bar{g}} $. Applying this to the equation above yields
\begin{eqnarray*}
&& \varphi_{\bar{g}}(g \circ (y \star z)) = \varphi_{\bar{g}}((g \circ y) \star g^{-1} \star (g \circ z)) \\
& \Rightarrow & \varphi_{\bar{g}}(g \circ (y \star z)) = \varphi_{\bar{g}}(g \circ y) \star \varphi_{\bar{g}}(g^{-1}) \star \varphi_{\bar{g}}(g \circ z) \\
& \Rightarrow & \bar{g} \circ g \circ (y \star z) \circ g = (\bar{g} \circ g \circ y \circ g) \star \varphi_{\bar{g}}(g)^{-1} \star (\bar{g} \circ g \circ z \circ g) \\
& \Rightarrow & (y \star z) \circ g = (y \circ g) \star g^{-1} \star (z \circ g);
\end{eqnarray*}
Hence (iii) holds. 

Finally, suppose that (iii) holds. Then for all $ y,z \in G $ we have
\begin{eqnarray*}
\varphi_{\bar{g}}(y \star z) & = & \bar{g} \circ (y \star z) \circ g \\
& = & \bar{g} \circ ( (y \circ g) \star g^{-1} \star (z \circ g)) \mbox{ by (iii) } \\
& = & (\bar{g} \circ y \circ g) \star \bar{g}^{-1} \star (\bar{g} \circ g^{-1}) \star \bar{g}^{-1} \star (\bar{g} \circ z \circ g) \mbox{ by Equation \ref{eqn_brace_relation}. }
\end{eqnarray*} 
Now by a well known skew brace identity (see \cite[Lemma 1.7, part (2)]{GV17}, for example) we have 
\[ \bar{g}^{-1} \star (\bar{g} \circ g^{-1}) \star \bar{g}^{-1} = (\bar{g} \circ g)^{-1} = e_{G}; \]
applying this to the final line above yields
\[ \varphi_{\bar{g}}(y \star z) = (\bar{g} \circ y \circ g) \star (\bar{g} \circ z \circ g) = \varphi_{\bar{g}}(y) \star \varphi_{\bar{g}}(z), \]
and so $ \varphi_{\bar{g}} \in \Aut(G,\star,\circ) $. Therefore $ \varphi_{g} \in \Aut(G,\star,\circ)) $, so (ii) holds. 
\end{proof}

\section{Integral module structure} \label{sec_intergal_module_structure}

We now suppose that $ L/K $ is a Galois extension of local or global fields (in any characteristic) with Galois group $ G $. As discussed  briefly in Section \ref{sec_introduction}, Hopf-Galois structures can be used in this context to study the structure of fractional ideals $ \B $ of $ L $. More precisely: if $ H $ is a Hopf algebra giving a Hopf-Galois structure on the extension then $ L $ is a free $ H $-module of rank $ 1 $ (this is a Hopf-Galois analogue of the classical normal basis theorem), and we seek integral analogues of this result. Each fractional ideal $ \B $ of $ L $ has an \textit{associated order} in $ H $:
\[ \A_{H}(\B) = \{ h \in H \mid h \cdot x \in \B \mbox{ for all } x \in \B \}, \]
and we may study the structure of $ \B $ as an $ \A_{H}(\B) $-module, with particular emphasis on determining criteria for it to be free (necessarily of rank $ 1 $). By construction, $ \A_{H}(\B) $ is the largest subring of $ H $ for which $ \B $ is a module, and it is the only order in $ H $ over which $ \B $ can possibly be free.  

The most famous results in this area are due to Byott \cite{By97}, who exhibits Galois extensions $ L/K $ of $ p $-adic fields for which the valuation ring $ \OL $ is not free over $ \A_{K[G]}(\OL) $, but is free over $ \A_{H}(\OL) $ for some other Hopf algebra $ H $ giving a Hopf-Galois structure on the extension. On the other hand, in \cite{Tr16a} and \cite{Tr18b} we show that a fractional ideal $ \B $ of $ L $ is free over its associated order in a particular Hopf-Galois structure if and only if it is free over its associated order in the opposite Hopf-Galois structure. The main result of this section is in a similar vein. Recall that an \textit{ambiguous ideal} of $ L $ is a fractional idea of $ L $ that is invariant under the action of $ G $. (In particular: the ring of algebraic integers $ \OL $ is an ambiguous ideal of $ L $, and if $ L/K $ is an extension of local fields then every fractional ideal of $ L $ is an ambiguous ideal.) We will prove:

\begin{theorem} \label{thm_integral_main}
Let $ L[N]^{G} $ give a Hopf-Galois structure on $ L/K $, and let $ g \in G $. Then an ambiguous ideal $ \B $ of $ L $ is free over its associated order in $ L[N]^{G} $ if and only if it is free over its associated order in $ L[N_{g}]^{G} $.  
\end{theorem}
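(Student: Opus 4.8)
The plan is to exploit the $ K $-Hopf algebra isomorphism $ \phi : L[N]^{G} \xrightarrow{\sim} L[N_{g}]^{G} $ of Equation \ref{eqn_HA_isomorphism}, together with the description of its interaction with the module actions furnished by Proposition \ref{prop_conjugation_action}. The crucial extra hypothesis is that $ \B $ is \emph{ambiguous}, so that $ g(\B) = \B $; this makes the Galois automorphism $ x \mapsto g(x) $ an $ \OK $-linear bijection of $ \B $ onto itself, and I will use it to carry a free generator for one structure to a free generator for the other.

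First I would show that $ \phi $ restricts to a ring isomorphism
\[ \phi : \A_{L[N]^{G}}(\B) \xrightarrow{\sim} \A_{L[N_{g}]^{G}}(\B). \]
Since $ \phi $ is already a bijective $ K $-algebra homomorphism, it is enough to check that, for $ z \in L[N]^{G} $, one has $ \phi(z) \in \A_{L[N_{g}]^{G}}(\B) $ precisely when $ z \in \A_{L[N]^{G}}(\B) $. By Proposition \ref{prop_conjugation_action} we have $ \phi(z) \cdot x = g(z \cdot g^{-1}(x)) $ for all $ x \in L $. As $ x $ ranges over $ \B $, ambiguity ensures that $ g^{-1}(x) $ ranges over $ g^{-1}(\B) = \B $; hence $ \phi(z) \cdot x \in \B $ for all $ x \in \B $ if and only if $ z \cdot y \in g^{-1}(\B) = \B $ for all $ y \in \B $, which is exactly the condition defining $ \A_{L[N]^{G}}(\B) $.

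Next I would transfer freeness across this isomorphism. Taking $ x = g(b) $ in Proposition \ref{prop_conjugation_action} gives $ \phi(z) \cdot g(b) = g(z \cdot b) $, which shows that the $ \OK $-linear bijection $ x \mapsto g(x) $ of $ \B $ intertwines the $ \A_{L[N]^{G}}(\B) $-module structure on $ \B $ with the $ \A_{L[N_{g}]^{G}}(\B) $-module structure on $ \B $, via $ \phi $. Consequently, if $ \B $ is free of rank one over $ \A_{L[N]^{G}}(\B) $ with generator $ b $, then the map $ h \mapsto h \cdot g(b) $ on $ \A_{L[N_{g}]^{G}}(\B) $, which on an element $ h = \phi(z) $ equals $ g(z \cdot b) $, is a composite of the bijections $ \phi^{-1} $, $ z \mapsto z \cdot b $, and $ x \mapsto g(x) $; it is therefore a bijection, so $ g(b) $ is a free generator of $ \B $ over $ \A_{L[N_{g}]^{G}}(\B) $. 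The converse follows by symmetry, replacing $ N $ by $ N_{g} $ and $ g $ by $ g^{-1} $ and using $ (N_{g})_{g^{-1}} = N $.

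The only delicate point — and the heart of the matter — is the first step: although $ \phi $ is a priori merely a $ K $-algebra isomorphism, it must be shown to respect the two \emph{integral} structures by carrying one associated order exactly onto the other. This is precisely where ambiguity is indispensable, since it guarantees $ g^{-1}(\B) = \B $, so that the $ g $-twist appearing in Proposition \ref{prop_conjugation_action} leaves $ \B $ fixed and the associated-order conditions on the two sides match. For a non-ambiguous ideal the same computation would instead relate $ \A_{L[N]^{G}}(\B) $ to the associated order of the ideal $ g(\B) $ in $ L[N_{g}]^{G} $, and the argument would not close.
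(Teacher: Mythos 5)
Your proof is correct and follows essentially the same route as the paper: both exploit the isomorphism $ \phi $ of Equation \ref{eqn_HA_isomorphism} together with Proposition \ref{prop_conjugation_action} and the ambiguity of $ \B $ to show that $ g(b) $ is a free generator of $ \B $ over the image of the associated order, with the converse by symmetry. The only (harmless) difference is that you verify $ \phi(\A_{L[N]^{G}}(\B)) = \A_{L[N_{g}]^{G}}(\B) $ directly from the definition, whereas the paper deduces this equality afterwards from the fact that the associated order is the unique order in the Hopf algebra over which $ \B $ can be free.
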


\begin{corollary}
An ambiguous ideal of $ L $ is free over its associated order in $ L[N]^{G} $ if and only if it is free over its associated order in $ L[N_{g}]^{G} $ for all $ g \in G $. 
\end{corollary}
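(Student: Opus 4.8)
The plan is to exploit the Hopf algebra isomorphism $ \phi : L[N]^{G} \xrightarrow{\sim} L[N_{g}]^{G} $ of Equation \ref{eqn_HA_isomorphism} together with its interaction with the module actions recorded in Proposition \ref{prop_conjugation_action}. The crucial feature I would use is that since $ \B $ is ambiguous it is stable under the Galois action of $ g $, and this is exactly the ingredient needed both to make $ \phi $ carry one associated order onto the other and to transport the module structure from one Hopf-Galois structure to the other.

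First I would show that $ \phi $ restricts to an $ \OK $-algebra isomorphism $ \A_{L[N]^{G}}(\B) \xrightarrow{\sim} \A_{L[N_{g}]^{G}}(\B) $. Let $ z \in \A_{L[N]^{G}}(\B) $ and $ y \in \B $. By Proposition \ref{prop_conjugation_action} we have $ \phi(z) \cdot y = g(z \cdot g^{-1}(y)) $; since $ \B $ is ambiguous we have $ g^{-1}(y) \in \B $, hence $ z \cdot g^{-1}(y) \in \B $, and applying $ g $ keeps us inside $ \B $. Thus $ \phi(z) \in \A_{L[N_{g}]^{G}}(\B) $. Because the relationship between $ N $ and $ N_{g} $ is symmetric (one recovers $ N $ as $ (N_{g})_{g^{-1}} $), the same argument applied to $ \phi^{-1} $ gives the reverse inclusion, so $ \phi $ restricts to an isomorphism of the two associated orders.

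Next I would produce an explicit compatible module isomorphism. Write $ A = \A_{L[N]^{G}}(\B) $ and $ A' = \A_{L[N_{g}]^{G}}(\B) $, and regard $ \B $ as an $ A $-module via the $ L[N]^{G} $-action and as an $ A' $-module via the $ L[N_{g}]^{G} $-action. I claim the Galois action of $ g $ furnishes an $ \OK $-module bijection $ \psi : \B \to \B $, $ \psi(x) = g(x) $, satisfying $ \psi(z \cdot x) = \phi(z) \cdot \psi(x) $ for all $ z \in A $ and $ x \in \B $. Indeed, $ \psi $ is a bijection of $ \B $ since $ \B $ is ambiguous, and applying Proposition \ref{prop_conjugation_action} with $ y = g(x) $ gives $ \phi(z) \cdot g(x) = g(z \cdot x) = \psi(z \cdot x) $. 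Thus $ \psi $ is an isomorphism from $ \B $ (as an $ A $-module) onto $ \B $ (as an $ A' $-module) that is semilinear along the ring isomorphism $ \phi $.

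Finally I would conclude by restriction of scalars: since $ \phi : A \to A' $ is a ring isomorphism, pulling back along $ \phi $ is an equivalence of module categories carrying the free rank-one $ A' $-module to a free rank-one $ A $-module, and combined with $ \psi $ this shows that $ \B $ is free over $ A $ if and only if it is free over $ A' $ (necessarily of rank one in each case). The main obstacle I anticipate is purely one of careful bookkeeping: the two associated orders act on the \emph{same} set $ \B $ through genuinely different actions that are twisted relative to one another by $ g $, so one cannot identify the two modules directly but must transport along the Galois map $ g $; it is precisely the ambiguity hypothesis on $ \B $ that licenses this transport, simultaneously guaranteeing that $ g $ restricts to a bijection of $ \B $ and that $ \phi $ matches up the two associated orders.
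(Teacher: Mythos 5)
Your argument is correct and is essentially the paper's own proof of Theorem \ref{thm_integral_main} (of which this corollary is the immediate consequence obtained by quantifying over $g$): both rely on the isomorphism $\phi$ of Equation \ref{eqn_HA_isomorphism}, Proposition \ref{prop_conjugation_action}, and the ambiguity of $\B$ to transport the generator $x$ to $g(x)$. The only cosmetic difference is that you identify $\phi(\A_{L[N]^{G}}(\B)) = \A_{L[N_{g}]^{G}}(\B)$ directly from the ambiguity of $\B$, whereas the paper deduces this equality afterwards from the fact that the associated order is the only order over which $\B$ can be free.
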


The proof of Theorem \ref{thm_integral_main} does not depend upon the fact that $ L $ is a field, and so we have

\begin{corollary}
If $ L/K $ is an extension of global fields then an ambiguous ideal of $ L $ is locally free over its associated order in $ L[N]^{G} $ if and only if it is locally free over its associated order in $ L[N_{g}]^{G} $ for all $ g \in G $. 
\end{corollary}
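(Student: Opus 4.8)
The plan is to reduce the global local-freeness statement to the single-$ g $ freeness result of Theorem~\ref{thm_integral_main} applied prime-by-prime, exploiting the remark that its proof never uses that $ L $ is a field. First I would recall that, for a finitely generated module over the relevant order, local freeness means freeness after completion at each nonzero prime $ \mathfrak{p} $ of $ \OK $: writing $ K_{\mathfrak{p}} $ for the completion, $ \mathcal{O}_{\mathfrak{p}} $ for its valuation ring, $ L_{\mathfrak{p}} = K_{\mathfrak{p}} \otimes_{K} L $, and $ \B_{\mathfrak{p}} = \mathcal{O}_{\mathfrak{p}} \otimes_{\OK} \B $, the ambiguous ideal $ \B $ is locally free over $ \A_{L[N]^{G}}(\B) $ precisely when $ \B_{\mathfrak{p}} $ is free over $ (\A_{L[N]^{G}}(\B))_{\mathfrak{p}} $ for every $ \mathfrak{p} $. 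The two facts I would record here are that completion commutes with the formation of associated orders, so that $ (\A_{L[N]^{G}}(\B))_{\mathfrak{p}} = \A_{L_{\mathfrak{p}}[N]^{G}}(\B_{\mathfrak{p}}) $, and that base change along $ K \to K_{\mathfrak{p}} $ carries the Hopf-Galois structure $ L[N]^{G} $ to the Hopf-Galois structure $ L_{\mathfrak{p}}[N]^{G} = K_{\mathfrak{p}} \otimes_{K} L[N]^{G} $ on the finite \'etale $ K_{\mathfrak{p}} $-algebra $ L_{\mathfrak{p}} $.

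The crux is then to observe that at each prime the hypotheses of Theorem~\ref{thm_integral_main} are met even though $ L_{\mathfrak{p}} $ is an algebra rather than a field. The group $ G $ still acts on $ L_{\mathfrak{p}} $ (trivially on the first tensor factor), the subgroups $ N, N_{g} \leq \Perm(G) $ and the isomorphism $ \phi $ of Equation~\ref{eqn_HA_isomorphism} are unchanged, and the action relation $ \phi(z) \cdot x = g(z \cdot g^{-1}(x)) $ of Proposition~\ref{prop_conjugation_action} holds verbatim for $ x \in L_{\mathfrak{p}} $, since its proof is a formal manipulation of the commuting actions of $ \lambda(G) $ and $ \rho(G) $. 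Moreover $ \B_{\mathfrak{p}} $ is again $ G $-stable and $ g $ restricts to an $ \mathcal{O}_{\mathfrak{p}} $-automorphism of $ \B_{\mathfrak{p}} $. As the author notes, the proof of Theorem~\ref{thm_integral_main} uses exactly these ingredients and nothing about $ L $ being a field, so the same argument shows that $ \B_{\mathfrak{p}} $ is free over $ \A_{L_{\mathfrak{p}}[N]^{G}}(\B_{\mathfrak{p}}) $ if and only if it is free over $ \A_{L_{\mathfrak{p}}[N_{g}]^{G}}(\B_{\mathfrak{p}}) $.

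Combining the two compatibility identities with this local equivalence gives, for each fixed $ g $, that $ \B $ is locally free over $ \A_{L[N]^{G}}(\B) $ if and only if it is locally free over $ \A_{L[N_{g}]^{G}}(\B) $, since local freeness is by definition the conjunction of the completed statements over all $ \mathfrak{p} $. The quantified form in the corollary then follows formally: the forward direction applies this single-$ g $ equivalence to each $ g \in G $ in turn, and the reverse direction specializes to $ g = e_{G} $, for which $ N_{e_{G}} = N $. I expect the only genuine work to lie in the bookkeeping of the first paragraph --- confirming that completion is compatible with the associated order and that the \'etale algebra $ L_{\mathfrak{p}} $ really carries the base-changed Hopf-Galois structure --- both of which are standard but should be stated explicitly; once these are in place the field-free form of Theorem~\ref{thm_integral_main} does all the substantive work.
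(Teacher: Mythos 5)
Your proposal is correct and matches the paper's intended argument: the paper gives no separate proof, only the remark that the proof of Theorem \ref{thm_integral_main} does not use that $L$ is a field, so it applies to the completions $\B_{\mathfrak{p}}$ over $L_{\mathfrak{p}}$ prime by prime. Your write-up simply makes explicit the standard bookkeeping (completion commutes with forming associated orders, base change of the Hopf-Galois structure) that the paper leaves implicit.
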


\begin{proof}[Proof of Theorem \ref{thm_integral_main}.]
Let $ \A $ denote the associated order of $ \B $ in $ L[N]^{G} $, and let $ \A_{g}  $ denote the associated order of $ \B $ in $ L[N_{g}]^{G} $. Suppose that $ \B $ is a free $ \A $-module, with generator $ x \in \B $. We shall show that $ \A_{g} = \phi(\A) $ and that $ \B $ is a free $ \A_{g} $-module with generator $ g(x) $ (note that $ g(x) \in \B $ because $ \B $ is an ambiguous ideal of $ L $).  

Since $ \phi $ is an isomorphism of $ K $-Hopf algebras, it is immediate that $ \phi(\A) $ is an order in $ L[N_{g}]^{G} $. Now let $ y \in \B $. Since $ \B $ is an ambiguous ideal of $ L $ we have $ g^{-1}(y) \in \B $, and since $ \B $ is a free $ \A $-module with generator $ x $ there exists a unique element $ z \in \A $ such that $ z \cdot x = g^{-1}(y) $. Now consider the action of the element $ \phi(z) \in \phi(\A) $ on the element $ g(x) \in \B $. We have:
\begin{eqnarray*}
\phi(z) \cdot g(x) & = & g ( z \cdot g^{-1}( g(x)) ) \mbox{ by Proposition \ref{prop_conjugation_action}) }\\
& = & g ( g^{-1}(y)) \\
& = & y.
\end{eqnarray*}
Hence given $ y \in \B $ there exists a unique element $ \phi(z) $ of the order $ \phi(\A) $ in $ L[N_{g}]^{G} $ such that $ \phi(z) \cdot g(x) = y $, and so $ \B $ is a free $ \phi(\A) $-module with generator $ g(x) $. Since $ \A_{g} $ is the only order in $ L[N_{g}]^{G} $ over which $ \B $ can be free, we conclude that $ \phi(\A) = \A_{g} $. 

For the converse statement we replace $ N $ by $ N_{g} $ and $ g $ by $ g^{-1} $ in the argument above. 
\end{proof}

The interaction between Theorem \ref{thm_integral_main} and the previously mentioned results concerning integral module structure with respect to opposite Hopf-Galois structures can cause one instance of freeness to propagate to a whole family.

\begin{example}
Let $ p,q $ be odd primes with $ p \equiv 1 \pmod{q} $ and let $ L/K $ be a Galois extension of local or global fields whose Galois group $ G $ is isomorphic to the metacyclic group of order $ pq $
\[ G = \langle s, t \mid s^{p}=t^{q}=e, \; t s t^{-1} = s^{d} \rangle, \]
as in Example \ref{example_pq}. For $ k=0, \ldots , p-1 $ let 
\[ N_{k} = \langle \lambda(s), \lambda(t) \rho(s^{k}t) \rangle. \]
It is routine to verify that each $ N_{k} $ is distinct, $ G $-stable, regular, and isomorphic to $ G $. We find that $ \rho(t) $ normalizes each $ N_{k} $, and that $ \rho(s)N_{k}\rho(s^{-1}) = N_{k+1-d} $. Thus the subgroups $ N_{k} $ are mutually $ \rho $-conjugate, and by \cite[Corollary 7.2]{KT22} the same is true for their opposites $ N_{k}^{opp} $. 

Now we write $ H_{k} $ (repectively $ H_{k}^{opp} $) for the Hopf algebra giving the Hopf-Galois structure corresponding to $ N_{k} $ (respectively $ H_{k}^{opp} $), and consider questions of integral module structure. By Theorem \ref{thm_integral_main}, if the ring of integers $ \OL $ is free over its associated order in one of the $ H_{k} $ then it is free over its associated order in each of the $ H_{k} $, and by \cite[Theorem 1.2]{Tr18b} if it is free over its associated order in a particular $ H_{k} $ then it is free over its associated order in $ H_{k}^{opp} $. Thus $ \OL $ is either free over its associated orders in all of these $ 2p $ Hopf-Galois structures or none of them. 
\end{example} 

\section{Interactions with existing constructions} \label{sec_interactions}

In this section we study the interactions between $ \rho $-conjugation and various results concerning the construction and enumeration of Hopf-Galois structures on a Galois extension $ L/K $. (We no longer assume that $ L/K $ is an extension of local or global fields.)

\subsection{Byott's translation theorem and fixed-point free pairs of homomorphisms}

Many results concerning the construction and enumeration of Hopf-Galois structures on separable field extensions employ Byott's translation theorem \cite{By96} to address some of the combinatorial difficulties inherent to Greither-Pareigis theory. Specializing to the Galois case, Byott's theorem may be summarized as follows: let $ M $ be an abstract group of the same order as $ G $, and consider {\em regular embeddings} $ \alpha : M \rightarrow \Perm(G) $ (that is: embeddings with regular image). Such an embedding yields a bijection $ a : M \rightarrow G $ defined by $ a(\mu) = \alpha(\mu)[e_{G}] $ for all $ \mu \in M $, and thence a regular embedding $ \beta : G \rightarrow \Perm(M) $ defined by $ \beta(g)[\mu] = a^{-1} \lambda(g) a $ for all $ g \in G $. Byott's theorem asserts that this construction yields a bijection between regular embeddings of $ M $ into $ \Perm(G) $ and regular embeddings of $ G $ into $ \Perm(M) $. Furthermore: $ \alpha(M) $ is $ G $-stable if and only if $ \beta(G) \subseteq \Hol(M) = \lambda(M) \Aut(M) \subseteq \Perm(M) $, and $ \alpha(M)=\alpha'(M) $ if and only if there exists $ \theta \in \Aut(M) $ such that $ \beta'(g) = \theta \beta(g) \theta^{-1} $ for all $ g \in G $. 

We can study $ \rho $-conjugate Hopf-Galois structures from this perspective, using the reformulation expressed in Proposition \ref{prop_inner_automorphism_formulation}. Given a regular embedding $ \alpha : M \rightarrow \Perm(G) $ and an element $ g \in G $, the map $ \alpha_{g} : M \rightarrow \Perm(G) $ defined by $ \alpha_{g}(\mu) = \varphi_{g} \alpha(\mu) \varphi_{g}^{-1} $ for all $ \mu \in M $ is a regular embedding with image $ \alpha(M)_{g} $. We then find the following:

\begin{lemma} \label{lemma_beta_conjugation}
Let $ \beta : G \rightarrow \Hol(M) $ be the embedding corresponding to $ \alpha $. Then then embedding $ \beta_{g} : G \rightarrow \Hol(M) $ corresponding to $ \alpha_{g} $ is given by $ \beta_{g} = \beta \varphi_{g^{-1}} $. 
\end{lemma}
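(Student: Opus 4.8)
The plan is to compute $ \beta_{g} $ directly from the definitions underlying Byott's correspondence, tracking how the inner automorphism $ \varphi_{g} $ acting on the $ G $-side is transported across to a reparametrization on the $ M $-side. Recall that $ \alpha $ yields the bijection $ a : M \to G $ with $ a(\mu) = \alpha(\mu)[e_{G}] $, and that $ \beta(h) $ is the permutation of $ M $ given by $ \beta(h) = a^{-1}\lambda(h)a $, i.e. $ \mu \mapsto a^{-1}(\lambda(h)(a(\mu))) $. The embedding $ \beta_{g} $ is obtained by applying this same recipe to $ \alpha_{g} $, so I would first identify the associated bijection $ a_{g}(\mu) = \alpha_{g}(\mu)[e_{G}] $.

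First I would show that $ a_{g} = \varphi_{g} \circ a $. Since $ \alpha_{g}(\mu) = \varphi_{g}\alpha(\mu)\varphi_{g}^{-1} $ and $ \varphi_{g} $ is an automorphism of $ G $ (hence fixes $ e_{G} $ when viewed as a permutation of $ G $), we have $ \varphi_{g}^{-1}[e_{G}] = e_{G} $, and therefore $ a_{g}(\mu) = \varphi_{g}(\alpha(\mu)[e_{G}]) = \varphi_{g}(a(\mu)) $. This gives $ a_{g}^{-1} = a^{-1}\varphi_{g}^{-1} $ as a composite of bijections. The crux of the argument is then a standard conjugation identity: for any automorphism $ \psi $ of $ G $, viewing $ \psi \in \Aut(G) \subseteq \Perm(G) $, one has $ \psi\lambda(h)\psi^{-1} = \lambda(\psi(h)) $, which follows from $ \psi\lambda(h)\psi^{-1}[x] = \psi(h\,\psi^{-1}(x)) = \psi(h)x $. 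Applying this with $ \psi = \varphi_{g}^{-1} = \varphi_{g^{-1}} $ yields $ \varphi_{g}^{-1}\lambda(h)\varphi_{g} = \lambda(\varphi_{g^{-1}}(h)) $.

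Combining these facts gives the claim directly:
\[ \beta_{g}(h) = a_{g}^{-1}\lambda(h)a_{g} = a^{-1}\varphi_{g}^{-1}\lambda(h)\varphi_{g}a = a^{-1}\lambda(\varphi_{g^{-1}}(h))a = \beta(\varphi_{g^{-1}}(h)), \]
so that $ \beta_{g} = \beta\varphi_{g^{-1}} $, as asserted. The computation is essentially bookkeeping, so the main obstacle is not conceptual difficulty but keeping the inverses and the direction of composition straight: one must use that $ \varphi_{g}^{-1} = \varphi_{g^{-1}} $, apply the conjugation identity with the correct automorphism, and be consistent about reading $ a^{-1}\sigma a $ as the permutation $ \mu \mapsto a^{-1}(\sigma(a(\mu))) $. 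I would also remark, for completeness, that $ \beta_{g} $ does land in $ \Hol(M) $: this is guaranteed by Byott's theorem together with Proposition \ref{prop_rho_conjugate_G_stable_regular}, since $ \alpha_{g}(M) = \alpha(M)_{g} $ is again $ G $-stable.
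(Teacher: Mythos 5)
Your proof is correct and follows essentially the same route as the paper: both first establish $a_{g} = \varphi_{g}\circ a$ and then conjugate $\lambda(h)$ by $\varphi_{g}^{-1}$ to produce $\lambda(\varphi_{g^{-1}}(h))$, the only difference being that you abstract this last step into the general identity $\psi\lambda(h)\psi^{-1} = \lambda(\psi(h))$ while the paper verifies it by direct evaluation on $a[\mu]$.
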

\begin{proof}
The bijection $ a_{g} : M \rightarrow G $ corresponding to $ \alpha_{g} $ is given by 
\[ a_{g}(\mu) =  \varphi_{g} \alpha(\mu) \varphi_{g^{-1}}[e_{G}] = \varphi_{g} \alpha(\mu)[e_{G}] = \varphi_{g} a(\mu). \]
Hence for $ h \in G $ and $ \mu \in M $ we have
\begin{eqnarray*}
\beta_{g}(h)[\mu] & = & a^{-1} \varphi_{g}^{-1} \lambda_{G}(h) \barphi_{g} a [\mu] \\
& = & a^{-1} \varphi_{g}^{-1} (h g a [\mu]  g^{-1}) \\
& = & a^{-1}( g^{-1} h g a [\mu]  g^{-1} g )\\
& = & a^{-1} \lambda(\varphi_{g^{-1}}(h)) a [\mu] \\
& = & \beta(\varphi_{g^{-1}}(h))[\mu].
\end{eqnarray*}
Hence $ \beta_{g} = \beta \varphi_{g^{-1}} $, as claimed. 
\end{proof}

As an application of Lemma \ref{lemma_beta_conjugation} we study the interaction between $ \rho $-conjugation and Hopf-Galois structures on Galois extensions arising from \textit{fixed point free} pairs of homomorphisms, due to Byott and Childs \cite{BC12}.  Given finite groups $ G,M $ of the same order, a pair of homomorphisms $ f_{1}, f_{2} : G \rightarrow M $ is called fixed point free if the only element $ h \in G $ for which $ f_{1}(h)=f_{2}(h) $ is $ h = e_{G} $. Given such a pair of homomorphisms, the map $ \beta_{f_{1},f_{2}} : G \rightarrow \Hol(M) $ defined by
\[ \beta_{f_{1},f_{2}}(h) = \lambda(f_{1}(h)) \rho(f_{2}(h)) \mbox{ for all } h \in G \]
is a regular embedding of $ G $ into $ \Hol(M) $ (where $ \lambda, \rho $ denote the left and right regular representations of $ M $), and therefore yields a Hopf-Galois structure of type $ M $ on a Galois extension with Galois group $ G $. Applying Lemma \ref{lemma_beta_conjugation} we obtain immediately

\begin{proposition}
Let $ f_{1},f_{2} : G \rightarrow \Hol(M) $ be a fixed point free pair of homomorphisms, with corresponding regular embedding $ \beta : G \rightarrow \Hol(M) $, and let $ g \in G $. For $ i=1,2 $ let $ f_{i,g} = f_{i}  \varphi_{g} $. Then $ f_{1,g}, f_{2,g} $ is a fixed point free pair of homomorphisms, and the corresponding embedding of $ G $ into $ \Hol(M) $ is $ \beta_{g} $. 
\end{proposition}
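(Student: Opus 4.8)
The plan is to check three things in turn, following the template set by Lemma \ref{lemma_beta_conjugation}: that each $ f_{i,g} $ is a homomorphism, that the pair $ (f_{1,g}, f_{2,g}) $ is fixed point free, and that the regular embedding of $ G $ into $ \Hol(M) $ attached to this pair is precisely $ \beta_g $. The first point is immediate, since $ f_{i,g} = f_i \varphi_g $ is a composite of the inner automorphism $ \varphi_g $ of $ G $ with the homomorphism $ f_i : G \to M $; the genuine content lies in the second and third points.

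For the fixed point free condition I would argue directly. If $ h \in G $ satisfies $ f_{1,g}(h) = f_{2,g}(h) $, then unwinding the definition gives $ f_1(\varphi_g(h)) = f_2(\varphi_g(h)) $, so $ \varphi_g(h) $ is a coincidence point of the original fixed point free pair $ (f_1, f_2) $. Hence $ \varphi_g(h) = e_G $, and since $ \varphi_g $ is an automorphism we obtain $ h = e_G $. This shows $ (f_{1,g}, f_{2,g}) $ is fixed point free, so $ \beta_{f_{1,g}, f_{2,g}} $ is indeed a regular embedding of $ G $ into $ \Hol(M) $.

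For the identification of the embedding I would compute the map attached to $ (f_{1,g}, f_{2,g}) $ from its definition and compare it with the expression for $ \beta_g $ furnished by Lemma \ref{lemma_beta_conjugation}. By definition this embedding sends $ h $ to $ \lambda(f_{1,g}(h)) \rho(f_{2,g}(h)) = \lambda(f_1(\varphi_g(h))) \rho(f_2(\varphi_g(h))) = \beta(\varphi_g(h)) $, where $ \beta = \beta_{f_1,f_2} $; that is, the new embedding is $ \beta $ precomposed with an inner automorphism, while Lemma \ref{lemma_beta_conjugation} identifies $ \beta_g $ with $ \beta $ precomposed with the corresponding inner automorphism. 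Matching these two expressions on every $ h \in G $ and invoking the injectivity of the construction then forces the two embeddings to coincide.

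The step I expect to be the main obstacle is precisely this last comparison, and specifically keeping the direction of the inner automorphism straight: an off-by-inverse slip would produce $ \beta_{g^{-1}} $ in place of $ \beta_g $. The safeguard is to pin down the conventions for $ \varphi_g $ and for the order of composition in $ f_{i,g} $ so that they agree with those fixed in Proposition \ref{prop_inner_automorphism_formulation} and Lemma \ref{lemma_beta_conjugation}; once the precomposing automorphism produced by the direct computation is matched against the one appearing in the lemma, the remaining work is pure substitution of definitions.
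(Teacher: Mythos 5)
Your first two steps are fine and supply detail that the paper omits (the paper derives this proposition from Lemma \ref{lemma_beta_conjugation} with no written argument): $f_{i,g}=f_{i}\varphi_{g}$ is a homomorphism because $\varphi_{g}$ is one, and a coincidence point $h$ of the new pair gives the coincidence point $\varphi_{g}(h)$ of the old pair, whence $h=e_{G}$. The gap is in your third step, at exactly the point you flag as the main obstacle and then do not resolve. Your direct computation is correct:
\[
\beta_{f_{1,g},f_{2,g}}(h)=\lambda\bigl(f_{1}(\varphi_{g}(h))\bigr)\,\rho\bigl(f_{2}(\varphi_{g}(h))\bigr)=\beta(\varphi_{g}(h)),
\]
so the embedding attached to the new pair is $\beta\varphi_{g}$. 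But Lemma \ref{lemma_beta_conjugation} says $\beta_{g}=\beta\varphi_{g^{-1}}$, so what you have produced is $\beta_{g^{-1}}$, not $\beta_{g}$. ``Matching the two expressions and invoking injectivity'' cannot force them to coincide: since $\beta$ is injective, $\beta\varphi_{g}=\beta\varphi_{g^{-1}}$ would require $\varphi_{g^{2}}=\mathrm{id}$, i.e.\ $g^{2}$ central in $G$, and in general $\beta_{g}$ and $\beta_{g^{-1}}$ correspond to the distinct $\rho$-conjugates $N_{g}$ and $N_{g^{-1}}$ (in the dihedral example following the proposition, $N_{r}\neq N_{r^{-1}}$ once $n\geq 6$). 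So the statement as printed is consistent with Lemma \ref{lemma_beta_conjugation} only if one takes $f_{i,g}=f_{i}\varphi_{g}^{-1}$, or equivalently replaces $\beta_{g}$ by $\beta_{g^{-1}}$ in the conclusion; since $g$ ranges over all of $G$ this reindexing does not change the resulting family of Hopf--Galois structures, but a proof must either make that correction explicit or explain why the two compositions agree, and yours does neither --- it asserts the coincidence that your own computation contradicts.
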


\begin{example}
Let $ n $ be an even natural number and let $ G = \langle r,s \rangle $ and $ M = \langle \mu, \pi \rangle $ both be isomorphic to the dihedral group of order $ 2n $ (here $ r, \mu $ have order $ n $ and $ s, \pi $ have order $ 2 $). Fix $ 0 \leq k \leq n/2-1 $ and define $ f_{1} : G \rightarrow M $ by $ f_{1}(r^{i}s^{j}) = \mu^{i}\pi^{j} $ and $ f_{2}: G \rightarrow M $ by $ f_{2}(r^{i}s^{j}) = (\mu^{2k}\pi)^{i} $. Then $ f_{1}, f_{2} $ form a fixed point free pair of homomorphisms, and the corresponding embedding $ \beta : G \hookrightarrow \Hol(M) $ is given by 
\[ \beta(r) = \lambda(\mu) \rho(\mu^{2k}\pi), \quad \beta(s) = \lambda(\pi). \]
(Here $ \lambda, \rho $ denote the left and right regular representations of $ M $.) We find that composing $ \beta $ with $ \varphi_{s} $ results in an equivalent embedding, whereas composing with each $ \varphi_{r^{\ell}} $ for $ 0 \leq \ell \leq n/2-1 $ gives an inequivalent embedding. Thus we obtain $ n/2 $ mutually $ \rho $-conjugate Hopf-Galois structures of dihedral type on a Galois extension with Galois group isomorphic to $ G $. In fact, these are the Hopf-Galois structures described in Example \ref{example_dihedral}.
\end{example}

\subsection{Hopf-Galois structures arising from abelian maps}

An \textit{abelian map} is a group endomorphism with abelian image. In \cite{Ch13} Childs shows that fixed point free abelian maps on a finite group $ G $ yield a family of Hopf-Galois structures on a Galois extension $ L/K $ with Galois group $ G $. This approach is generalized by Koch \cite{Koc21a}, as follows: let $ \psi : G \rightarrow G $ be an abelian map, and for each $ h \in G $ define $ \eta_{\psi}(h) \in \Perm(G) $ by 
\[ \eta_{\psi}(h) = \lambda(h \psi(h)^{-1}) \rho(\psi(h)^{-1}). \]
Then $ N_{\psi} = \{ \eta_{\psi}(h) \mid h \in G \} $ is a $ G $-stable regular subgroup of $ \Perm(G) $ \cite[Theorem 3.1]{Koc21a}, which therefore corresponds to a Hopf-Galois structure on $ L/K $. 

\begin{proposition} \label{prop_conjugation_abelian_maps}
Let $ \psi $ be an abelian map on $ G $ and let $ N_{\psi} $ be the corresponding $ G $-stable regular subgroup of $ \Perm(G) $. Then for each $ g \in G $ the subgroup $ N_{\psi,g} $ arises from an abelian map on $ G $. 
\end{proposition}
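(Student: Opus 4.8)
The plan is to show directly that $N_{\psi,g}$ is of the form $N_{\psi'}$ for some abelian map $\psi'$ on $G$, by conjugating the generators $\eta_{\psi}(h)$ by the inner automorphism $\varphi_{g}$ and identifying the resulting permutations with those of the form $\eta_{\psi'}(h')$. By Proposition \ref{prop_inner_automorphism_formulation} we have $N_{\psi,g} = \varphi_{g} N_{\psi} \varphi_{g}^{-1}$, so the first step is to compute $\varphi_{g}\,\eta_{\psi}(h)\,\varphi_{g}^{-1}$ explicitly. Writing $\varphi_{g} = \rho(g)\lambda(g)$ and using that $\varphi_{g}$ is an automorphism of $G$ (so it commutes appropriately with $\lambda$ and $\rho$ via the identities $\varphi_{g}\lambda(x)\varphi_{g}^{-1} = \lambda(\varphi_{g}(x))$ and $\varphi_{g}\rho(x)\varphi_{g}^{-1} = \rho(\varphi_{g}(x))$), I would rewrite
\[
\varphi_{g}\,\eta_{\psi}(h)\,\varphi_{g}^{-1} = \lambda\!\left(\varphi_{g}(h\psi(h)^{-1})\right)\rho\!\left(\varphi_{g}(\psi(h)^{-1})\right).
\]

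The second step is to reparametrize. Setting $h' = \varphi_{g}(h)$ as $h$ ranges over $G$ gives a bijection, so it suffices to produce an abelian map $\psi'$ for which the displayed permutation equals $\eta_{\psi'}(h')$, i.e. for which $\lambda(h'\psi'(h')^{-1})\rho(\psi'(h')^{-1})$ matches the expression above. Comparing the $\rho$-components suggests defining $\psi' = \varphi_{g}\circ\psi\circ\varphi_{g}^{-1}$; with this choice $\psi'(h') = \varphi_{g}(\psi(h))$, so $\psi'(h')^{-1} = \varphi_{g}(\psi(h)^{-1})$, matching the $\rho$-component. For the $\lambda$-component one checks that $h'\psi'(h')^{-1} = \varphi_{g}(h)\varphi_{g}(\psi(h)^{-1}) = \varphi_{g}(h\psi(h)^{-1})$ since $\varphi_{g}$ is a homomorphism, which matches as well. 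Thus $\varphi_{g}\,\eta_{\psi}(h)\,\varphi_{g}^{-1} = \eta_{\psi'}(h')$, and as $h$ runs over $G$ so does $h'$, giving $N_{\psi,g} = N_{\psi'}$.

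The final step is to verify that $\psi' = \varphi_{g}\psi\varphi_{g}^{-1}$ is genuinely an abelian map, i.e. a group endomorphism of $G$ with abelian image. This is immediate: $\psi'$ is a composite of the homomorphisms $\varphi_{g}^{-1}$, $\psi$, and $\varphi_{g}$, hence a homomorphism, and its image is $\varphi_{g}(\mathrm{im}\,\psi)$, which is the isomorphic image of an abelian group under an automorphism and so is abelian. I do not expect a serious obstacle here; the only point requiring care is keeping track of the conjugation identities for $\lambda$ and $\rho$ under an automorphism and confirming that the reparametrization $h \mapsto \varphi_{g}(h)$ correctly converts the generating set $\{\eta_{\psi}(h)\}$ into $\{\eta_{\psi'}(h')\}$ without disturbing the group structure. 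Since $\psi'$ is manifestly an abelian map, the conclusion that $N_{\psi,g}$ arises from an abelian map follows at once.
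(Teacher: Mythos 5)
Your proof is correct and follows essentially the same route as the paper: both invoke Proposition \ref{prop_inner_automorphism_formulation} to write $N_{\psi,g} = \varphi_{g} N_{\psi} \varphi_{g}^{-1}$ and then identify this with $N_{\psi'}$ for $\psi' = \varphi_{g}\psi\varphi_{g}^{-1}$. The only difference is that the paper cites \cite[Proposition 5.1]{KT22} for the fact that $\varphi\psi\varphi^{-1}$ is an abelian map with $N_{\varphi\psi\varphi^{-1}} = \varphi N_{\psi}\varphi^{-1}$, whereas you verify this directly via the identities $\varphi\lambda(x)\varphi^{-1} = \lambda(\varphi(x))$ and $\varphi\rho(x)\varphi^{-1} = \rho(\varphi(x))$ and the reparametrization $h' = \varphi_{g}(h)$, which is a correct and self-contained substitute.
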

\begin{proof}
By Proposition \ref{prop_inner_automorphism_formulation} we have $ N_{\psi,g} = \varphi_{g} N_{\psi} \varphi_{g}^{-1} $. It is shown in \cite[Proposition 5.1]{KT22} that if $ \psi $ is an abelian map on $ G $ and $ \varphi \in \Aut(G) $ then the map $ \varphi \psi \varphi^{-1} $ is also an abelian map on $ G $, and $ N_{\varphi \psi \varphi^{-1}} = \varphi N_{\psi} \varphi^{-1} $. Thus $ N_{\psi,g} $ arises from the abelian map $ \psi' = \varphi_{g} \psi \varphi_{g}^{-1} $ on $ G $. 
\end{proof}

\begin{example}
Let $ n \geq 5 $ and suppose that $ G $ is isomorphic to the symmetric group $ S_{n} $. In \cite[Example 3.7]{Koc21a} Koch shows that the abelian maps on $ G $ correspond bijectively with elements $ x \in S_{n} $ of order at most $ 2 $, as follows:
\[ \psi_{x}(t) = \left\{ \begin{array}{ll} e & \mbox{if } t \in A_{n} \\ x & \mbox{if } t \not \in A_{n} \end{array} \right. \]
Let $ N_{x} $ be the $ G $-stable regular subgroup arising from the abelian map $ \psi_{x} $. Applying Proposition \ref{prop_conjugation_abelian_maps} we see that for each $ g \in G $ the subgroup $ N_{\psi,g} $ arises from the abelian map $ \varphi_{g} \psi_{x} \varphi_{g}^{-1} $, which behaves as follows
\[ \varphi_{g} \psi_{x} \varphi_{g}^{-1}(t) = \left\{ \begin{array}{ll} e & \mbox{if } t \in A_{n} \\ gxg^{-1} & \mbox{if } t \not \in A_{n} \end{array} \right. \]
Thus $ \varphi_{g} \psi_{x} \varphi_{g}^{-1} = \psi_{g x g^{-1}} $, and so the $ \rho $-conjugate subgroups correspond to conjugacy classes of elements of order at most $ 2 $ in $ G $. In particular, the subgroups arising from transpositions are mutually $ \rho $-conjugate. 
\end{example}

\subsection{Induced Hopf-Galois structures}

The method of \textit{induced} Hopf-Galois structures is due to Crespo, Rio, and Vela \cite{CRV16}: if $ T $ is a subgroup of $ G $ having a normal complement $ S $ in $ G $ then, given a Hopf-Galois structure on $ L/L^{T} $ and a Hopf-Galois structure on $ L^{T}/K $, we can induce a Hopf-Galois structure on $ L/K $. Note that $ T $ need not be a normal subgroup of $ G $, and so the extension $ L^{T}/K $ need not be a Galois extension. In order to study Hopf-Galois structures on this extension, we need the full strength of the Greither-Pareigis classification. In this context, it implies that the Hopf-Galois structures on $ L^{T}/K $ correspond bijectively with regular subgroups of $ \Perm(G/T) $ (the permutation group of the left coset space) that are $ G $-stable in the sense that they are normalized by the image of $ G $ under the left translation map $ \lambda : G \rightarrow \Perm(G/T) $ defined by $ \lambda(h)[xT] = hxT $ for all $ h \in G $ and $ xT \in G/T $.

Given a $ G $-stable regular subgroup $ A \subset \Perm(G/T) $ and a $ T $-stable regular subgroup $ B \subset \Perm(T) $, we use the fact that $ G $ is the semidirect product of $ S $ and $ T $ to identify $ A $ with a regular subgroup of $ \Perm(S) $, and then to define a map $ \eta : A \times B \rightarrow \Perm(G) $ by 
\[ \eta(a,b)[s t] = a[s] b[t] \mbox{ for all } s \in S \mbox{ and } t \in T. \] 
It is easy to see that $ \eta $ is an embedding whose image $ N $ is a regular subgroup of $ \Perm(G) $ isomorphic to $ A \times B $; \cite[Theorem 3]{CRV16} shows that $ N $ is $ G $-stable, and hence corresponds to a Hopf-Galois structure on $ L/K $. We say that the subgroup $ N $ is \textit{induced} from the subgroups $ A $ and $ B $, and that the Hopf-Galois structure on $ L/K $ corresponding to $ N $ is induced from those corresponding to $ A $ on $ L^{T}/K $ and $ B $ on $ L/L^{T} $. 

Now let $ g \in G $ and let $ \varphi = \varphi_{g} $ be the inner automorphism corresponding to $ g $. Since $ S $ is normal in $ G $ we have $ \varphi(S) = S $, and it is easily shown that $ G $ is the semidirect product of $ S $ and $ \varphi(T) $. We shall show that the Hopf-Galois structure corresponding to $ N_{g} $ is induced from certain Hopf-Galois structures on $ L^{\varphi(T)}/K $ and $ L/L^{\varphi(T)} $. 

\begin{lemma} \label{lem_induced}
Let $ g \in G $ and $ \varphi = \varphi_{g} $. Then:
\begin{enumerate}
\item $ \varphi A \varphi^{-1} $ identifies naturally with a $ G $-stable regular subgroup of $ \Perm(G / \varphi(T)) $; 
\item $ \varphi B \varphi^{-1} $ identifies naturally with a $ \varphi(T) $-stable regular subgroup of $ \Perm(\varphi(T)) $. 
\end{enumerate}
\end{lemma}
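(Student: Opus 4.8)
The plan is to read both parts as transport of structure along the inner automorphism $ \varphi = \varphi_{g} $, exploiting that $ \varphi $ is a bijection of $ G $ carrying the decomposition $ G = S \rtimes T $ to $ G = S \rtimes \varphi(T) $ (recall $ \varphi(S) = S $ since $ S \trianglelefteq G $). Concretely, $ \varphi $ induces a bijection of left coset spaces $ \widehat{\varphi} : G/T \to G/\varphi(T) $, $ \widehat{\varphi}(xT) = \varphi(x)\varphi(T) $, and restricts to a bijection $ \varphi|_{T} : T \to \varphi(T) $. Each of these bijections of underlying sets induces an isomorphism of the corresponding symmetric groups by conjugation, and the natural identifications in the statement are precisely $ \widehat{\varphi} A \widehat{\varphi}^{-1} \subseteq \Perm(G/\varphi(T)) $ for (i) and $ (\varphi|_{T}) B (\varphi|_{T})^{-1} \subseteq \Perm(\varphi(T)) $ for (ii). I would state this identification explicitly at the outset so that the claims become assertions about regularity and stability of these conjugated subgroups.

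For part (i), regularity is immediate: conjugating a regular subgroup by a bijection of the underlying set produces a regular subgroup, and $ \widehat{\varphi} $ is such a bijection from $ G/T $ to $ G/\varphi(T) $. For $ G $-stability the key input is the intertwining relation between the left-translation maps $ \lambda : G \to \Perm(G/T) $ and $ \lambda' : G \to \Perm(G/\varphi(T)) $, namely
\[ \widehat{\varphi}\, \lambda(h)\, \widehat{\varphi}^{-1} = \lambda'(\varphi(h)) \quad \text{for all } h \in G, \]
which is a one-line computation on cosets. Granting it, for every $ h \in G $ we have $ \lambda'(\varphi(h))\,(\widehat{\varphi} A \widehat{\varphi}^{-1})\,\lambda'(\varphi(h))^{-1} = \widehat{\varphi}\,(\lambda(h) A \lambda(h)^{-1})\,\widehat{\varphi}^{-1} = \widehat{\varphi} A \widehat{\varphi}^{-1} $, the last equality because $ A $ is $ G $-stable. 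As $ \varphi $ is surjective, $ \varphi(h) $ ranges over all of $ G $, so $ \widehat{\varphi} A \widehat{\varphi}^{-1} $ is normalized by $ \lambda'(G) $ and is therefore $ G $-stable.

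Part (ii) runs identically with $ T $ in place of $ G/T $: regularity is preserved by the bijection $ \varphi|_{T} : T \to \varphi(T) $, and one establishes the analogous relation $ (\varphi|_{T})\, \lambda_{T}(t)\, (\varphi|_{T})^{-1} = \lambda_{\varphi(T)}(\varphi(t)) $ between the left regular representations of $ T $ and of $ \varphi(T) $. The $ \varphi(T) $-stability of $ (\varphi|_{T}) B (\varphi|_{T})^{-1} $ then follows formally from the $ T $-stability of $ B $, using that $ \varphi|_{T} $ is a bijection onto $ \varphi(T) $ so that $ \varphi(t) $ exhausts $ \varphi(T) $.

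The only genuine content is the two intertwining relations; everything else is formal manipulation. The subtlety I would be careful about is that $ \lambda $ and $ \lambda' $ (respectively $ \lambda_{T} $ and $ \lambda_{\varphi(T)} $) act on different sets and are not literally equal but are conjugate via the induced bijection up to the twist $ h \mapsto \varphi(h) $ on the source; because $ \varphi $ is an automorphism this twist is harmless for verifying normalization, which is the crux that makes both stability statements drop out. I would also flag at the start that $ G = S \rtimes \varphi(T) $, so that $ \Perm(G/\varphi(T)) $ and $ \Perm(\varphi(T)) $ are indeed the correct ambient groups for an induced Hopf-Galois structure attached to this new decomposition.
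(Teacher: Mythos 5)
Your proposal is correct and follows essentially the same route as the paper: both identify $\varphi A \varphi^{-1}$ and $\varphi B \varphi^{-1}$ via the bijections $G/T \to G/\varphi(T)$ and $T \to \varphi(T)$ induced by $\varphi$, and both derive $G$-stability (resp.\ $\varphi(T)$-stability) from the relation $\lambda(h)\,\varphi a \varphi^{-1}\,\lambda(h)^{-1} = \varphi\,(\lambda(\varphi^{-1}(h)) a \lambda(\varphi^{-1}(h))^{-1})\,\varphi^{-1}$ together with surjectivity of $\varphi$. The only difference is presentational: you isolate the intertwining identity as a standalone step, whereas the paper verifies it inline on coset representatives.
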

\begin{proof}
To show (i), let $ a \in A $ and $ s\varphi(T) \in G / \varphi(T) $ (with $ s \in S $). Then 
\[ \varphi a \varphi^{-1} [s\varphi(T)] = \varphi a ( \varphi^{-1}(s) T ) = s' \varphi(T) \]
for some element $ s' \in S $. Hence $ \varphi A \varphi^{-1} $ permutes $ G / \varphi(T) $, and this action is regular because $ A $ acts regularly on $ G/T $. To show $ G $-stability, let $ h \in G $ and consider
\begin{eqnarray*}
\lambda(h) \varphi a \varphi^{-1}  \lambda(h^{-1}) [s \varphi(T)] & = & h \varphi a [ \varphi^{-1}(h^{-1}) \varphi^{-1}(s) T ] \\
& = & \varphi( \varphi^{-1}(h) a [ \varphi^{-1}(h^{-1}) \varphi^{-1}(s) T ]  \\
& = & \varphi( \varphi^{-1}(h) a \varphi^{-1}(h^{-1}) \varphi^{-1} [s \varphi(T) ]  \\
& = & \varphi a' \varphi^{-1} [ s \varphi(T) ]
\end{eqnarray*}
for some $ a' \in A $, since $ A $ is $ G $-stable. Hence $ \varphi A \varphi^{-1} $ is $ G $-stable. The proof of (ii) is similar. 
\end{proof}

Note that if $ g \in T $ then the regular subgroup $ \varphi B \varphi^{-1} $ of $ \Perm(T) $ is $ \rho $-conjugate to $ B $. 

\begin{proposition} \label{prop_conjugation_induced}
For $ A, B, N, $ and $ g $ as above, the subgroup $ N_{g} \leq \Perm(G) $ is induced from $ \varphi A \varphi^{-1} \leq \Perm(G/\varphi(T)) $ and $ \varphi B \varphi^{-1}  \leq \Perm(\varphi(T)) $.
\end{proposition}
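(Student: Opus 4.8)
The plan is to reduce everything to the inner-automorphism description of $ \rho $-conjugation and then verify that conjugation by $ \varphi = \varphi_{g} $ is compatible with the semidirect product decompositions on both sides. First I would invoke Proposition \ref{prop_inner_automorphism_formulation} to rewrite the target subgroup as $ N_{g} = \varphi N \varphi^{-1} $, so that the claim becomes the assertion that $ \varphi N \varphi^{-1} $ equals the subgroup $ N' $ induced from $ \varphi A \varphi^{-1} $ and $ \varphi B \varphi^{-1} $ relative to the decomposition $ G = S \rtimes \varphi(T) $ (which is legitimate since $ \varphi(S) = S $ and $ \varphi(T) $ is a complement to $ S $, as noted before Lemma \ref{lem_induced}). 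I would denote by $ \eta' $ the embedding defining $ N' $, so that $ \eta'(a',b')[s't'] = a'[s']\,b'[t'] $ for $ s' \in S $ and $ t' \in \varphi(T) $, with $ \varphi A \varphi^{-1} $ and $ \varphi B \varphi^{-1} $ identified with regular subgroups of $ \Perm(S) $ and $ \Perm(\varphi(T)) $ exactly as in Lemma \ref{lem_induced}.

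Since $ N $ and $ N' $ are generated by the images of $ \eta $ and $ \eta' $ respectively, it suffices to match a single generator; that is, to show that for each $ a \in A $ and $ b \in B $,
\[ \varphi\, \eta(a,b)\, \varphi^{-1} = \eta'(\varphi a \varphi^{-1}, \varphi b \varphi^{-1}). \]
To do this I would evaluate the left-hand side on an arbitrary element $ s't' $ of $ G $ with $ s' \in S $ and $ t' \in \varphi(T) $. The key observation is that $ \varphi^{-1}(s't') = \varphi^{-1}(s')\,\varphi^{-1}(t') $ is already in the normal form for the original decomposition $ G = S \rtimes T $, because $ \varphi^{-1}(s') \in S $ and $ \varphi^{-1}(t') \in T $. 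Applying the defining formula for $ \eta $ and then $ \varphi $, and using that $ \varphi $ is a homomorphism preserving both $ S $ and $ \varphi(T) $, the left-hand side becomes $ \varphi(a[\varphi^{-1}(s')]) \cdot \varphi(b[\varphi^{-1}(t')]) $.

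Next I would compute the right-hand side on the same element, feeding in precisely the identifications worked out in the proof of Lemma \ref{lem_induced}: that computation shows that, under the bijection $ S \rightarrow G/\varphi(T) $ sending $ s' \mapsto s'\varphi(T) $, the permutation $ \varphi a \varphi^{-1} $ sends $ s' $ to $ \varphi(a[\varphi^{-1}(s')]) $, and similarly $ \varphi b \varphi^{-1} $ sends $ t' \in \varphi(T) $ to $ \varphi(b[\varphi^{-1}(t')]) $. Substituting these into the definition of $ \eta' $ reproduces exactly $ \varphi(a[\varphi^{-1}(s')]) \cdot \varphi(b[\varphi^{-1}(t')]) $, so the two sides agree for every $ s't' $ and the displayed identity holds. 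Passing to the subgroups generated by both sides then gives $ N_{g} = \varphi N \varphi^{-1} = N' $, as required.

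The routine part is the algebra; the main obstacle is bookkeeping, namely keeping straight the two different identifications of an abstract permutation group with a subgroup of $ \Perm(S) $. The subgroup $ A $ is identified via the coset space $ G/T $, whereas $ \varphi A \varphi^{-1} $ is identified via the coset space $ G/\varphi(T) $, and these use different sets of coset representatives. The crux is to confirm that the matching of these two identifications is exactly the one delivered by Lemma \ref{lem_induced}, so that the chain of substitutions lands on the same element of $ G $ on both sides, rather than on elements differing by a factor in $ S $ or in $ \varphi(T) $.
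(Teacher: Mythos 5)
Your proposal is correct and follows essentially the same route as the paper: both reduce to $N_{g} = \varphi N \varphi^{-1}$ via Proposition \ref{prop_inner_automorphism_formulation}, evaluate $\varphi\,\eta(a,b)\,\varphi^{-1}$ on a general element $s\varphi(t)$ written in the normal form for $G = S \rtimes \varphi(T)$, and identify the result with $\eta(\varphi a \varphi^{-1}, \varphi b \varphi^{-1})$ using the identifications from Lemma \ref{lem_induced}. The bookkeeping point you flag about the two identifications of $A$ and $\varphi A \varphi^{-1}$ with subgroups of $\Perm(S)$ is exactly the content of the paper's displayed computation, so there is nothing further to add.
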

\begin{proof}
Let $ h \in G $, and write $ h = s\varphi(t) $ with $ s \in S $ and $ t \in T $. Now let $ \eta = \eta(a,b) \in N $ and consider
\begin{eqnarray*}
\varphi \eta \varphi^{-1} [s\varphi(t)] & = & \varphi \eta [\varphi^{-1}(s) t]  \\
& = & \varphi (a[\varphi^{-1}(s)] b[t]) \mbox{ (note that $ \varphi^{-1}(s) \in S $)} \\
& = & (\varphi a[\varphi^{-1}(s)]) (\varphi b[t]) \\
& = & (\varphi a[\varphi^{-1}(s)]) (\varphi b[\varphi^{-1}( \varphi(t))]) \\
& = & \eta( \varphi a \varphi^{-1}, \varphi b \varphi^{-1}) [s \varphi(t)]. 
\end{eqnarray*}
Hence $ N_{g} \leq \Perm(G) $ is induced from $ \varphi A \varphi^{-1} \leq \Perm(G/\varphi(T)) $ and $ \varphi B \varphi^{-1}  \leq \Perm(\varphi(T)) $.
\end{proof}

We remark that the argument of Proposition \ref{prop_conjugation_induced} remains valid if $ \varphi $ is any automorphism of $ G $ that preserves $ S $.  

\begin{example}
Let $ p,q $ be odd primes with $ p \equiv 1 \pmod{q} $ and let $ L/K $ be a Galois extension of fields whose Galois group $ G $ is isomorphic to the metacyclic group of order $ pq $
\[ G = \langle s, t \mid s^{p}=t^{q}=e, \; t s t^{-1} = s^{d} \rangle, \]
as in Example \ref{example_pq}. Let $ T = \langle t \rangle $ and $ S = \langle s \rangle $. Then the conjugates of $ T $ are precisely the subgroups $ T_{i} = s^{i} T s^{-i} $, and $ S $ is a normal complement to each of these subgroups in $ G $. For each $ i $ the extensions $ L/L^{T_{i}} $ and $ L^{T_{i}}/K $ have prime degree and each admits a unique Hopf-Galois structure (note that each $ L^{T_{i}}/K $ is non-normal). In each case, we can induce a Hopf-Galois structure on $ L/K $ from these, and by Proposition \ref{prop_conjugation_induced} we obtain a family of $ p $ mutually $ \rho $-conjugate Hopf-Galois on $ L/K $. Referring to Example \ref{example_pq} we see that these are all of the Hopf-Galois structures of cyclic type admitted by $ L/K $. 
\end{example}

%\bibliographystyle{plain}
%\bibliography{OmahaBib}

\end{document}